\documentclass[11pt]{amsart}
\usepackage{amssymb,mathtools,bm,graphicx}
\usepackage{booktabs}
\usepackage{enumitem}
\usepackage{pgfplots}
\usepackage{pgfplotstable}
\pgfplotsset{compat=1.18}
\pgfplotsset{
  every axis/.append style={
    legend columns=-1,
    legend cell align={center},
    legend style={at={(0.5,1.16)}, anchor=south, font=\scriptsize, draw=none, fill=none, /tikz/every even column/.append style={column sep=0.45em}},
    label style={font=\small},
    tick label style={font=\scriptsize}
  }
}
\usepgfplotslibrary{groupplots}
\newcommand{\CC}{\bm C}
\newcommand{\BB}{\bm B}
\newcommand{\DD}{\bm D}
\newcommand{\EE}{\mathcal E}
\newcommand{\HH}{\bm H}
\newcommand{\Id}{\bm I}
\newcommand{\RR}{\mathcal R}
\newcommand{\SSS}{\bm S}
\newcommand{\TT}{\bm T}

\newcommand{\nn}{\bm n}
\newcommand{\uu}{\bm u}
\newcommand{\vv}{\bm v}

\newcommand{\Wi}{\mathrm{Wi}}
\newcommand{\Rey}{\mathrm{Re}}
\newcommand{\Pe}{\mathrm{Pe}}
\newcommand{\dt}{\Delta t}
\newcommand{\dd}{\,\mathrm d}
\newcommand{\calD}{\mathcal D}
\newcommand{\calK}{\mathcal K}
\newcommand{\calO}{\mathcal O}
\newcommand{\tr}{\operatorname{tr}}
\newcommand{\divv}{\nabla\!\cdot}
\newcommand{\grad}{\nabla}

\newcommand{\Exp}{\operatorname{Exp}}

\newcommand{\norm}[1]{\left\|#1\right\|}

\makeatletter
\newcounter{manualresult}[section]
\renewcommand{\themanualresult}{\thesection.\arabic{manualresult}}
\newcommand{\manualresulthead}[3]{%
  \par\medskip\stepcounter{manualresult}%
  \protected@edef\@currentlabel{\themanualresult}%
  \noindent{\normalfont\scshape #1~\themanualresult%
  \if\relax\detokenize{#3}\relax\else\ (#3)\fi.} #2\ }
\newenvironment{theorem}[1][]{\manualresulthead{Theorem}{\itshape}{#1}}{\par\medskip}
\newenvironment{lemma}[1][]{\manualresulthead{Lemma}{\itshape}{#1}}{\par\medskip}
\newenvironment{proposition}[1][]{\manualresulthead{Proposition}{\itshape}{#1}}{\par\medskip}

\renewenvironment{proof}{\par\medskip\noindent{\itshape Proof.}\ }{\hfill$\square$\par\medskip}
\makeatother

\title[Barrier Schemes for FENE Flows]{Entropy-Compatible Barrier Schemes for Diffusive FENE Flows}
\author{Sai Peng}
\address{School of Mathematics and Computational Science, Xiangtan University, Xiangtan, Hunan, China}
\email{pscfd@xtu.edu.cn}

\begin{document}

\begin{abstract}
FENE-type conformation-tensor models impose a finite-extensibility constraint that is absent from Oldroyd--B flow: the conformation tensor must satisfy $\CC\succ0$ and $\tr\CC<L^2$.  Positive definiteness alone is therefore insufficient, since a numerical state can remain positive while crossing the singular trace barrier.  Even a trace-preserving logarithmic parametrization is not enough by itself: high-order reconstruction can remain inside the finite-extensibility domain while injecting artificial FENE entropy.  We develop and analyze a barrier-preserving entropy-compatible discretization for FENE-P type flows with polymer center-of-mass molecular diffusion and for trace-singular FENE-family closures with the same entropy structure.  The method combines a trace-barrier free energy, a finite-extensibility logarithmic parametrization, a least-damping entropy-compatible barrier-log reconstruction, molecular diffusion paired with the barrier entropy variable, compatible quadrature for polymeric work, and a scaled FENE stress variable for the small-Weissenberg limit.  For admissible discrete states we prove finite-extensibility preservation at entropy quadrature points, existence and bisection computability of the maximal entropy-admissible reconstruction parameter, a fully discrete free-energy inequality with relaxation and molecular-diffusion barrier dissipation, a quantitative AP stress closure, and a fixed-discretization Newtonian limit.  A conditional relative-entropy estimate is derived on compact subsets of the finite-extensibility domain.  Numerical diagnostics verify barrier preservation, entropy-compatible reconstruction, energy decay, AP closure, coupled velocity--pressure--stress accuracy, and high-Weissenberg robustness near the trace constraint.
\end{abstract}
\maketitle

\par\medskip\noindent\textbf{Keywords.} 
FENE-P model, finite extensibility, polymer molecular diffusion, conformation tensor, trace barrier, entropy-compatible reconstruction, asymptotic-preserving schemes
\par\medskip

\par\medskip\noindent\textbf{MSC 2020.} 
65M12, 65M60, 76A10, 76M20
\par\medskip

\section{Introduction}

The Oldroyd--B conformation tensor is constrained only by positive definiteness.  In finitely extensible nonlinear elastic models the admissible set is smaller.  If $\CC$ denotes the conformation tensor and $L^2$ is the squared maximum extension parameter, a FENE-type closure requires
\[
  \CC\in \calD_L:=\{\CC\in\mathbb S_{++}^d:\ \tr\CC<L^2\}.
\]
Equivalently, in index notation the model requires $C_{kk}<L^2$.  The coefficient multiplying the elastic stress becomes singular as $\tr\CC\uparrow L^2$.  This changes the numerical problem qualitatively.  A log-conformation update can preserve $\CC\succ0$ while still allowing $\tr\CC$ to approach or exceed $L^2$ after reconstruction, interpolation, or nonlinear iteration.  The finite-extensibility barrier must therefore be treated as part of the structure, not as an afterthought.

This paper develops a structure-preserving numerical analysis for this constraint.  We focus on the FENE-P type conformation closure
\[
  f_L(\CC)=\frac{L^2-d}{L^2-\tr\CC},\qquad
  \TT(\CC)=f_L(\CC)\CC-\Id ,
\]
where $L^2>d$ and $\TT(\Id)=0$.  The singular denominator is precisely the difficulty.  The associated entropy variable is
\[
  \HH_L(\CC)=f_L(\CC)\Id-\CC^{-1}.
\]
It is the gradient of a convex barrier energy on $\calD_L$, and it yields the same polymeric-work cancellation as the Oldroyd--B entropy variable, provided the stress tensor used in the momentum equation is the same accepted tensor used in the entropy calculation.

The central idea is to treat $\calD_L$ as the computational state space.  We combine five devices.  First, a trace-barrier entropy prevents accepted states from reaching the finite-extensibility boundary.  Second, a barrier logarithmic parametrization maps unconstrained symmetric matrices bijectively into $\calD_L$, so positivity and $\tr\CC<L^2$ can be enforced during nonlinear solves.  Third, high-order reconstructed barrier-log states are accepted only if they satisfy a FENE entropy budget.  Fourth, polymer molecular diffusion is discretized in a way that dissipates the same barrier entropy.  Fifth, the momentum equation is assembled in the scaled FENE stress
\[
  \SSS=\frac{\TT(\CC)}{\Wi},
\]
which remains regular in the Newtonian relaxation limit.  The resulting formulation is simultaneously barrier preserving, energy stable, and asymptotic preserving.

The contribution is not the introduction of the FENE-P model itself.  Rather, it is a compatibility analysis for finite-extensibility constraints at the fully discrete level.  The main results are:
\begin{enumerate}[label=(\roman*)]
\item a convex trace-barrier entropy whose gradient produces the FENE stress cancellation;
\item a barrier-log parametrization that enforces $\CC\succ0$ and $\tr\CC<L^2$ by construction;
\item an entropy-compatible barrier-log reconstruction selected by the largest admissible parameter on a logarithmic path;
\item a fully discrete free-energy inequality with relaxation and polymer molecular-diffusion dissipation;
\item an AP stress-closure estimate showing $\SSS_h=\DD(\uu_h)+\calO(\Wi)$ at fixed $h$ and $\dt$;
\item a conditional relative-entropy estimate on compact subsets of the finite-extensibility domain;
\item numerical diagnostics that test the barrier, entropy correction, AP limit, coupled pressure--stress feedback, and high-Weissenberg behavior.
\end{enumerate}

Related work on FENE closures, log-conformation variables, energy-stable viscoelastic schemes, and asymptotic-preserving relaxation discretizations is extensive; see, for example, \cite{Bird1987,FattalKupferman2004,FattalKupferman2005,HulsenFattalKupferman2005,Keunings1986,BarrettSuli2011,Boyaval2009,Jin1999,LiuYu2011}.  The present paper is closest in spirit to structure-preserving conformation-tensor methods, but the finite-extensibility constraint introduces a trace barrier that is not present in Oldroyd--B.  This extra constraint is the focus of the analysis.

\begin{table}[htbp]
\centering
\caption{Structural differences between Oldroyd--B and the FENE-P setting considered here.}
\label{tab:oldroyd-fene-position}
\small
\begin{tabular}{p{0.24\linewidth}p{0.31\linewidth}p{0.34\linewidth}}
\toprule
Issue & Oldroyd--B & FENE-P type model \\
\midrule
Admissible set & $\CC\succ0$ & $\CC\succ0$ and $\tr\CC<L^2$ \\
Entropy singularity & $\lambda_{\min}(\CC)\downarrow0$ & $\lambda_{\min}(\CC)\downarrow0$ or $\tr\CC\uparrow L^2$ \\
Stress law & $\CC-\Id$ & $f_L(\CC)\CC-\Id$ with singular $f_L$ \\
Solver parametrization & log-conformation suffices for positivity & log-conformation must be combined with a trace barrier \\
Reconstruction & log reconstruction remains in $\mathbb S_{++}^d$ & barrier-log reconstruction remains in $\calD_L$ and is filtered by FENE entropy \\
AP variable & $(\CC-\Id)/\Wi$ & $(f_L(\CC)\CC-\Id)/\Wi$ \\
\bottomrule
\end{tabular}
\end{table}

The paper retains the entropy identity, finite-extensibility preservation, entropy-compatible barrier-log reconstruction, the discrete energy estimate, the AP closure, and the core benchmarks.  This organization keeps the finite-extensibility mechanism visible while connecting each diagnostic to a structural claim.

\subsection*{Position relative to logarithmic and factor reconstructions}

Classical log-conformation variables \cite{FattalKupferman2004,FattalKupferman2005,HulsenFattalKupferman2005} and square-root or symmetric-factor variables \cite{BalciThomasesRenardy2011} are designed primarily to preserve positive definiteness and to improve robustness in strongly stretched flows.  For FENE-type models this is only the first admissibility condition.  A positive tensor with $\tr\CC\ge L^2$ is outside the model domain, and a positive tensor with $\tr\CC<L^2$ may still be unacceptable if a reconstruction step injects a mesh-scale burst of FENE entropy.  Table~\ref{tab:reconstruction-comparison} summarizes the distinction.

Richter, Iaccarino, and Shaqfeh \cite{RichterIaccarinoShaqfeh2010} used a different and important finite-extensibility safeguard in DNS of FENE-P flow past a cylinder: the trace equation is advanced first and the positive root of a scalar algebraic relation is selected so that the FENE denominator remains positive.  That idea is a trace-first admissibility update.  The present construction has a different purpose.  It treats the whole tensor as the constrained state variable, enforces $\CC\succ0$ and $\tr\CC<L^2$ simultaneously by a barrier-log map, and then accepts high-order reconstructed states only through a FENE entropy budget.  Thus the guarantee is not only denominator positivity in a component update, but compatibility with the fully discrete free-energy inequality.

\begin{table}[htbp]
\centering
\caption{Comparison of reconstruction variables for FENE-type conformation tensors.}
\label{tab:reconstruction-comparison}
\small
\setlength{\tabcolsep}{3pt}
\begin{tabular}{p{0.19\linewidth}p{0.18\linewidth}p{0.18\linewidth}p{0.33\linewidth}}
\toprule
Reconstruction & Positivity & Trace barrier & Entropy compatibility \\
\midrule
Standard log & built in & not built in & not automatic for high-order states \\
Square root or factor & built in & not built in & not automatic for high-order states \\
Trace clipping & enforceable & enforceable & generally not variationally consistent \\
Barrier log & built in & built in & requires entropy acceptance step \\
Barrier log with \eqref{eq:theta-star} & built in & built in & enforced by a least-damping FENE budget \\
\bottomrule
\end{tabular}
\end{table}

Thus the proposed reconstruction has two layers.  The barrier-log map enforces the geometry of the FENE state space, while the entropy acceptance rule enforces compatibility with the free-energy estimate.  Both layers are needed for the fully discrete theorem below.

\section{FENE-P model with polymer molecular diffusion}

Let $b=L^2>d$ and define
\[
  f_b(\CC)=\frac{b-d}{b-\tr\CC},\qquad
  \calD_b=\{\CC\in\mathbb S_{++}^d:\tr\CC<b\}.
\]
The molecularly diffusive FENE-P type system is written as
\begin{align}
  \Rey(\partial_t\uu+\uu\cdot\grad\uu)+\grad p-\beta\Delta\uu
  &=(1-\beta)\divv\SSS+\bm f,                                      \label{eq:mom}\\
  \divv\uu&=0,                                                       \label{eq:div}\\
  \partial_t\CC+\uu\cdot\grad\CC-(\grad\uu)\CC-\CC(\grad\uu)^T
  &=-\frac1{\Wi}\TT(\CC)+\varepsilon\Delta\CC,                       \label{eq:conf}\\
  \TT(\CC)&=f_b(\CC)\CC-\Id,\qquad
  \SSS=\frac{\TT(\CC)}{\Wi}.                                         \label{eq:stress}
\end{align}
The normalization $f_b(\Id)=1$ makes $\CC=\Id$ the equilibrium conformation.  The parameter $\varepsilon=\Pe_p^{-1}$ represents polymer center-of-mass molecular diffusion in the constant-polymer-density FENE-P closure.  Periodic boundary conditions or the no-flux condition
\[
  \nabla\CC\,\nn=0
\]
are assumed for this diffusive flux.  If the polymer number density is not constant, the closure must be augmented by a concentration equation; that extension is outside the present analysis.  The point here is that molecular diffusion must be paired with the FENE barrier entropy rather than treated as an arbitrary componentwise smoothing operator.

The elastic entropy is
\[
  \Phi_b(\CC)
  =
  -\log\det\CC-(b-d)\log\left(\frac{b-\tr\CC}{b-d}\right).
  \label{eq:fene-entropy}
\]
It satisfies $\Phi_b(\Id)=0$ and
\[
  D\Phi_b(\CC)[\BB]
  =
  \left(f_b(\CC)\Id-\CC^{-1}\right):\BB
  =\HH_b(\CC):\BB .
\]
The Hessian is positive:
\[
  D^2\Phi_b(\CC)[\BB,\BB]
  =
  \tr(\CC^{-1}\BB\CC^{-1}\BB)
  +
  \frac{b-d}{(b-\tr\CC)^2}\bigl(\tr\BB\bigr)^2 .
  \label{eq:hessian}
\]
Thus $\Phi_b$ is convex on $\calD_b$ and blows up at both parts of the boundary: $\lambda_{\min}(\CC)\downarrow0$ and $\tr\CC\uparrow b$.

\begin{lemma}[trace-barrier buffer]
Let $\CC\in\calD_b$ and $\Phi_b(\CC)\le M$.  Then there is a positive function $\delta_b(M)$ such that
\[
  b-\tr\CC\ge \delta_b(M)>0 .
\]
One possible choice is
\[
  \delta_b(M)
  =
  (b-d)\exp\!\left[-\frac{M+d\log(b/d)}{b-d}\right].
\]
\end{lemma}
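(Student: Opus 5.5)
Writing $\tau:=\tr\CC\in(0,b)$, the idea is to bound the $-\log\det\CC$ part of $\Phi_b$ from below by a function of the trace alone. That turns $\Phi_b(\CC)\le M$ into an explicit lower bound on the barrier term $-(b-d)\log\bigl((b-\tau)/(b-d)\bigr)$. Solving that inequality for $b-\tau$ then gives the buffer.

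\emph{Step 1 (AM--GM).} Since $\CC\succ0$ has positive eigenvalues $\lambda_i$ with $\sum_i\lambda_i=\tau$, AM--GM gives $\det\CC\le(\tau/d)^d$. Using $\tau<b$, this yields
\[
  -\log\det\CC\ \ge\ -d\log(\tau/d)\ \ge\ -d\log(b/d).
\]

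\emph{Step 2 (insert into $\Phi_b\le M$).} Substituting Step 1 into the definition of $\Phi_b$ gives
\[
  M\ \ge\ \Phi_b(\CC)\ \ge\ -d\log(b/d)-(b-d)\log\!\left(\frac{b-\tau}{b-d}\right).
\]
Rearranging,
\[
  \log\!\left(\frac{b-\tau}{b-d}\right)\ \ge\ -\frac{M+d\log(b/d)}{b-d}.
\]
Exponentiating and multiplying by $b-d>0$ gives exactly $b-\tr\CC\ge\delta_b(M)$, with $\delta_b(M)$ as stated. It is positive because it is $b-d>0$ times an exponential.

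\emph{Main point.} The only real step is Step 1: the $\log\det$ term can be arbitrarily negative, so it must be controlled uniformly on $\calD_b$. AM--GM together with the crude bound $\tau<b$ does this, which is why $d\log(b/d)$ appears in $\delta_b(M)$. No other earlier result is needed beyond the definitions of $\calD_b$ and $\Phi_b$.
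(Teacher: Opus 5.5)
Your proof is correct and follows the paper's own argument: AM--GM gives $\det\CC\le(\tr\CC/d)^d\le(b/d)^d$, hence $-\log\det\CC\ge-d\log(b/d)$. Substituting this into $\Phi_b(\CC)\le M$ and solving for $b-\tr\CC$ then gives exactly the stated $\delta_b(M)$.
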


\begin{proof}
Since $\det\CC\le(\tr\CC/d)^d\le(b/d)^d$, one has $-\log\det\CC\ge-d\log(b/d)$.  From the entropy bound,
\[
  -(b-d)\log\left(\frac{b-\tr\CC}{b-d}\right)
  \le M+d\log(b/d),
\]
which gives the stated lower bound on $b-\tr\CC$.
\end{proof}

\begin{lemma}[relaxation dissipation]
For every $\CC\in\calD_b$,
\[
  \TT(\CC):\HH_b(\CC)
  =
  \sum_{i=1}^d \lambda_i\left(f_b(\CC)-\lambda_i^{-1}\right)^2
  \ge0,
\]
where $\lambda_i$ are the eigenvalues of $\CC$.
\end{lemma}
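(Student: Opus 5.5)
We compute the contraction $\TT(\CC):\HH_b(\CC)$ in an orthonormal eigenbasis of $\CC$. Write $f=f_b(\CC)$, which is a positive scalar on $\calD_b$ because $\tr\CC<b$ and $b>d$. Then $\TT(\CC)=f\CC-\Id$ and $\HH_b(\CC)=f\Id-\CC^{-1}$ are both polynomial or rational functions of the single symmetric matrix $\CC$. Hence they commute with $\CC$, and with $\CC=\bm Q\Lambda\bm Q^T$ they are diagonalized by the same orthogonal $\bm Q$. Their diagonal entries are
\[
  t_i=f\lambda_i-1,\qquad h_i=f-\lambda_i^{-1}.
\]

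Since the Frobenius product is invariant under orthogonal similarity, we get
\[
  \TT(\CC):\HH_b(\CC)=\sum_{i=1}^d t_ih_i .
\]
The key algebraic observation is that $t_i=\lambda_i\bigl(f-\lambda_i^{-1}\bigr)=\lambda_i h_i$. Therefore each term equals $\lambda_i h_i^2=\lambda_i\bigl(f_b(\CC)-\lambda_i^{-1}\bigr)^2$, which gives the stated identity. Nonnegativity then follows immediately, since $\lambda_i>0$ for $\CC\in\mathbb S_{++}^d$.

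Alternatively, without diagonalizing, one can write $\TT=\CC^{1/2}\HH_b\CC^{1/2}$. This gives $\TT:\HH_b=\tr(\CC^{1/2}\HH_b\CC^{1/2}\HH_b)=\|\CC^{1/4}\HH_b\CC^{1/4}\|_F^2\ge0$, using that $\HH_b$ commutes with $\CC$.

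There is no genuine obstacle. The only points to state explicitly are that $f_b(\CC)$ is a scalar, so $\TT$ and $\HH_b$ share eigenvectors with $\CC$, and that $f_b(\CC)$ is finite on $\calD_b$. One may also note that equality holds exactly when all $\lambda_i=f_b(\CC)^{-1}$, i.e.\ $\CC=\lambda\Id$ with $\lambda f_b(\lambda\Id)=1$. For $b>d$, solving $\lambda(b-d)=b-d\lambda$ gives $\lambda=1$, so equality holds exactly at $\CC=\Id$.
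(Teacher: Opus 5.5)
Your proposal is correct and follows essentially the same route as the paper: both diagonalize $\CC$, use that $f_b(\CC)$ is a scalar so $\TT$ and $\HH_b$ share its eigenbasis, and factor $f_b\lambda_i-1=\lambda_i(f_b-\lambda_i^{-1})$ before summing. Your alternative form $\|\CC^{1/4}\HH_b\CC^{1/4}\|_F^2$ and the equality case $\CC=\Id$ are correct additions that the paper does not include.
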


\begin{proof}
Because $f_b(\CC)$ is a scalar function of $\tr\CC$, it commutes with $\CC$.  Diagonalizing $\CC$ gives
\[
  \bigl(f_b\lambda_i-1\bigr)\bigl(f_b-\lambda_i^{-1}\bigr)
  =
  \lambda_i\bigl(f_b-\lambda_i^{-1}\bigr)^2 .
\]
Summing over $i$ proves the claim.
\end{proof}

\begin{lemma}[polymer molecular-diffusion entropy dissipation]
Assume periodic boundary conditions or $\nabla\CC\,\nn=0$ on $\partial\Omega$.  For every smooth admissible conformation field,
\[
  \int_\Omega \Delta\CC:\HH_b(\CC)\,\dd x
  =
  -\int_\Omega \mathcal I_b(\CC)\,\dd x ,
\]
where
\[
  \mathcal I_b(\CC)
  =
  \sum_{j=1}^d
  \left[
  \tr\!\left(\CC^{-1}\partial_j\CC\,\CC^{-1}\partial_j\CC\right)
  +
  \frac{b-d}{(b-\tr\CC)^2}\bigl(\partial_j\tr\CC\bigr)^2
  \right]\ge0 .
\]
\end{lemma}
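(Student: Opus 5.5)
The identity follows from integration by parts combined with the Hessian formula \eqref{eq:hessian}. Writing $\Delta\CC=\sum_j\partial_j\partial_j\CC$, I would integrate each term by parts:
\[
  \int_\Omega \partial_j\partial_j\CC:\HH_b(\CC)\,\dd x
  =
  \int_{\partial\Omega} (\partial_j\CC\, n_j):\HH_b(\CC)\,\dd s
  -\int_\Omega \partial_j\CC:\partial_j\bigl(\HH_b(\CC)\bigr)\,\dd x .
\]
Summed over $j$, the boundary integrand is $(\nabla\CC\,\nn):\HH_b(\CC)$. It vanishes under the no-flux condition $\nabla\CC\,\nn=0$, and the boundary contributions cancel in the periodic case. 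Smoothness and admissibility of $\CC$ matter here: on a compact $\overline\Omega$ a smooth admissible field keeps $\lambda_{\min}(\CC)$ bounded below and $b-\tr\CC$ bounded below. So $\HH_b(\CC)$ is smooth up to the boundary and the integration by parts is legitimate. This is also where one would appeal to the buffer from the trace-barrier lemma if only an entropy bound were available.

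The remaining step is the chain rule. Since $\HH_b=D\Phi_b$ under the Frobenius pairing, we have $\partial_j\HH_b(\CC)=D\HH_b(\CC)[\partial_j\CC]$. Hence $\partial_j\CC:\partial_j\HH_b(\CC)=D^2\Phi_b(\CC)[\partial_j\CC,\partial_j\CC]$. I would verify this directly from two elementary derivatives:
\[
  \partial_j\bigl(\CC^{-1}\bigr)=-\CC^{-1}\partial_j\CC\,\CC^{-1},
  \qquad
  \partial_j f_b(\CC)=\frac{b-d}{(b-\tr\CC)^2}\,\partial_j\tr\CC .
\]
The trace term gives $\frac{b-d}{(b-\tr\CC)^2}\,\partial_j\tr\CC\;(\Id:\partial_j\CC)=\frac{b-d}{(b-\tr\CC)^2}(\partial_j\tr\CC)^2$. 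The inverse term gives $\partial_j\CC:\CC^{-1}\partial_j\CC\,\CC^{-1}=\tr(\CC^{-1}\partial_j\CC\,\CC^{-1}\partial_j\CC)$, using the symmetry of $\partial_j\CC$. Summing over $j$ yields exactly $\mathcal I_b(\CC)$, and the minus sign from the integration by parts produces the stated identity.

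For nonnegativity, the second term of $\mathcal I_b$ is a square times the positive factor $(b-d)/(b-\tr\CC)^2$, since $b>d$. For the first term, write $\CC^{-1}=\CC^{-1/2}\CC^{-1/2}$ and set $\bm A=\CC^{-1/2}\partial_j\CC\,\CC^{-1/2}$. This matrix is symmetric, and by cyclicity of the trace
\[
  \tr\bigl(\CC^{-1}\partial_j\CC\,\CC^{-1}\partial_j\CC\bigr)=\tr(\bm A^2)=|\bm A|^2\ge0 .
\]
The one point needing care is justifying the boundary and regularity step, and this is routine under the stated smoothness and admissibility assumptions. The rest is the Hessian computation already recorded in \eqref{eq:hessian}.
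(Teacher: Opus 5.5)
Your proposal is correct and follows the same route as the paper: integrate by parts, drop the boundary term using periodicity or $\nabla\CC\,\nn=0$, and identify $\partial_j\CC:\partial_j\HH_b(\CC)$ with $D^2\Phi_b(\CC)[\partial_j\CC,\partial_j\CC]$ via the Hessian formula. Your direct computation of the two derivatives and your $\CC^{-1/2}$ argument for $\mathcal I_b(\CC)\ge0$ simply fill in details the paper leaves implicit.
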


\begin{proof}
Integrating by parts gives
\[
  \int_\Omega \Delta\CC:\HH_b(\CC)\,\dd x
  =
  -\sum_{j=1}^d\int_\Omega
  \partial_j\CC:D\HH_b(\CC)[\partial_j\CC]\,\dd x .
\]
Since $D\HH_b=D^2\Phi_b$, the Hessian formula \eqref{eq:hessian} gives the stated expression.
\end{proof}

\subsection*{Trace-singular FENE-family closures}

Although the formulas below are written for the FENE-P coefficient $f_b$, the same algebra applies to a larger FENE-family class.  Let
\[
  \TT_g(\CC)=g(\tr\CC)\CC-\Id,\qquad
  \HH_g(\CC)=g(\tr\CC)\Id-\CC^{-1},
\]
where $g\in C^1([d,b))$, $g(d)=1$, $g'(s)\ge0$, and $g(s)\to\infty$ as $s\uparrow b$.  Define
\[
  G_g(s)=\int_d^s g(r)\,\dd r,\qquad
  \Phi_g(\CC)=-\log\det\CC+G_g(\tr\CC).
\]
The FENE-P entropy corresponds to $g=f_b$ and
$G_g(s)=-(b-d)\log((b-s)/(b-d))$.

\begin{proposition}[common entropy mechanism for FENE-family closures]
For every $\CC\in\calD_b$ and every symmetric $\BB$,
\[
\begin{aligned}
  D\Phi_g(\CC)[\BB]&=\HH_g(\CC):\BB,\\
  D^2\Phi_g(\CC)[\BB,\BB]
  &=
  \tr(\CC^{-1}\BB\CC^{-1}\BB)+g'(\tr\CC)(\tr\BB)^2 .
\end{aligned}
\]
Moreover,
\[
  \TT_g(\CC):\HH_g(\CC)
  =
  \sum_{i=1}^d \lambda_i\left(g(\tr\CC)-\lambda_i^{-1}\right)^2\ge0,
\]
and, for $\divv\uu=0$,
\[
  -\big((\grad\uu)\CC+\CC(\grad\uu)^T\big):\HH_g(\CC)
  =
  -2\TT_g(\CC):\grad\uu .
\]
\end{proposition}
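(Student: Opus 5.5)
The plan is to verify the four identities in turn, each by a direct computation that mirrors the FENE-P case treated earlier.

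\emph{First and second derivatives.} We differentiate the two pieces of $\Phi_g$ separately. For the log-determinant, the standard formulas are
\[
D(-\log\det\CC)[\BB]=-\CC^{-1}:\BB,\qquad
D(-\CC^{-1})[\BB]=\CC^{-1}\BB\CC^{-1}.
\]
The second one gives the bilinear term $\CC^{-1}\BB\CC^{-1}:\BB=\tr(\CC^{-1}\BB\CC^{-1}\BB)$, using the symmetry of $\CC$ and $\BB$.

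For the trace part, the chain rule and $G_g'=g$ give
\[
D\bigl(G_g(\tr\CC)\bigr)[\BB]=g(\tr\CC)\tr\BB=g(\tr\CC)\Id:\BB.
\]
Differentiating once more yields $g'(\tr\CC)(\tr\BB)^2$. The hypothesis $g\in C^1([d,b))$ is exactly what justifies this.

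One point needs care: $\tr\CC$ may lie in $(0,d)$. There $G_g$ is understood through the $C^1$ extension of $g$, or equivalently the formula holds wherever $g$ is defined. This is the main technical caveat. I would handle it by noting that $g$ is assumed defined and $C^1$ on $(0,b)$, as it is for $f_b$, so that $\Phi_g$ is smooth on all of $\calD_b$.

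\emph{Dissipation identity.} We repeat the proof of the relaxation-dissipation lemma with $f_b$ replaced by $g(\tr\CC)$. This scalar multiple of $\Id$ commutes with $\CC$, so both $\TT_g$ and $\HH_g$ are diagonal in an eigenbasis of $\CC$. The eigenvalues are $g\lambda_i-1$ and $g-\lambda_i^{-1}$ respectively, and their product is
\[
(g\lambda_i-1)(g-\lambda_i^{-1})=\lambda_i\bigl(g-\lambda_i^{-1}\bigr)^2.
\]
Each term is nonnegative since $\lambda_i>0$, and summing over $i$ gives the claim.

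\emph{Upper-convected term.} Write $\bm A=\grad\uu$. We expand
\[
(\bm A\CC+\CC\bm A^T):\bigl(g\Id-\CC^{-1}\bigr)
\]
using the identities $\bm M:\Id=\tr\bm M$ and $\bm M:\bm N=\tr(\bm M\bm N^T)$, together with the symmetry of $\CC^{-1}$ and cyclicity of the trace.

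The $g\Id$ part gives $2g\,\tr(\bm A\CC)=2g\CC:\bm A$.

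The $-\CC^{-1}$ part gives
\[
-\tr(\bm A\CC\CC^{-1})-\tr(\CC\bm A^T\CC^{-1})=-2\tr\bm A=-2\divv\uu.
\]
This vanishes by incompressibility.

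On the other side, $-2\TT_g:\bm A=-2g\CC:\bm A+2\Id:\bm A$, and $\Id:\bm A=\tr\bm A=0$ again. Comparing the two sides gives the stated equality. The only subtlety is tracking transposes, since $\grad\uu$ is not symmetric; the Frobenius pairing with the symmetric tensors $\CC$ and $\Id$ absorbs this.

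The main obstacle is therefore only the domain issue for $G_g$ noted in the first step; all remaining steps are routine algebra.
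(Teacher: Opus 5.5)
Your proof is correct and follows the same route as the paper: the chain rule and the $\log\det$ derivative give the first two identities, simultaneous diagonalization of $\CC$, $\TT_g$, $\HH_g$ gives the dissipation identity, and direct trace expansion handles the stretching term. Your caveat about $\tr\CC<d$ is well taken, since the stated hypothesis $g\in C^1([d,b))$ does not cover all of $\calD_b$ and an extension of $g$ to $(0,b)$ is implicitly required; note also that the $2\tr(\grad\uu)$ terms coincide on both sides, so the last identity holds even without incompressibility.
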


\begin{proof}
The derivative and Hessian follow from the chain rule and the derivative of $-\log\det\CC$.  Since $g(\tr\CC)$ is scalar, $\TT_g$ and $\HH_g$ are diagonal in the eigenbasis of $\CC$, giving the relaxation identity.  The stretching identity follows from
$\CC:\grad\uu=\CC:\DD(\uu)$ and $\Id:\grad\uu=\divv\uu=0$.
\end{proof}

This proposition is the structural reason for calling the method FENE-type rather than only FENE-P.  Once a closure has the trace-singular entropy variable $\HH_g$, the barrier-log state space, entropy-compatible reconstruction, and discrete coupling argument below carry over with $f_b$ replaced by $g(\tr\CC)$.  The numerical section uses FENE-P because it is the standard Peterlin closure and makes the finite-extensibility denominator explicit.

\section{Continuous energy law}

The FENE trace barrier is compatible with the usual polymeric-work cancellation.  Testing \eqref{eq:mom} by $\uu$ gives the polymeric work term
\[
  - (1-\beta)\int_\Omega \SSS:\grad\uu\,\dd x
  =
  -\frac{1-\beta}{\Wi}\int_\Omega \TT(\CC):\grad\uu\,\dd x .
\]
Testing \eqref{eq:conf} by $\HH_b(\CC)$ yields the stretching contribution
\[
  -\big((\grad\uu)\CC+\CC(\grad\uu)^T\big):\HH_b(\CC).
\]
Since $\HH_b(\CC)=f_b\Id-\CC^{-1}$ and $\divv\uu=0$,
\[
\begin{aligned}
  &-\big((\grad\uu)\CC+\CC(\grad\uu)^T\big):(f_b\Id-\CC^{-1})\\
  &\qquad
  =
  -2 f_b\CC:\grad\uu+2\Id:\grad\uu
  =
  -2\TT(\CC):\grad\uu .
\end{aligned}
\]
Multiplication by $(1-\beta)/(2\Wi)$ therefore cancels the polymeric work exactly.

\begin{theorem}[continuous barrier energy identity]
Assume a smooth solution with $\CC(t,x)\in\calD_b$ and periodic or entropy-compatible boundary conditions.  Then
\[
\begin{aligned}
  \frac{\dd}{\dd t}\left[
  \frac{\Rey}{2}\norm{\uu}^2_{L^2}
  +\frac{1-\beta}{2\Wi}\int_\Omega\Phi_b(\CC)\,\dd x
  \right]
  &+\beta\norm{\grad\uu}_{L^2}^2\\
  &+\frac{1-\beta}{2\Wi^2}
  \int_\Omega\TT(\CC):\HH_b(\CC)\,\dd x\\
  &+\frac{(1-\beta)\varepsilon}{2\Wi}
  \int_\Omega \mathcal I_b(\CC)\,\dd x
  =
  (\bm f,\uu),
\end{aligned}
\label{eq:continuous-energy}
\]
where $\mathcal I_b$ is the molecular-diffusion entropy density in Lemma~2.3.
\end{theorem}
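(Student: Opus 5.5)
The plan is to combine two standard energy tests and let the polymeric work cancel. First, test the momentum equation \eqref{eq:mom} with $\uu$ and integrate over $\Omega$. By incompressibility \eqref{eq:div} and the periodic (or no-slip/entropy-compatible) boundary conditions, the convective term $\Rey(\uu\cdot\grad\uu,\uu)$ and the pressure term $(\grad p,\uu)$ vanish. The viscous term integrates by parts to $\beta\norm{\grad\uu}^2_{L^2}$. Integrating the stress term by parts gives
\[
  \frac{\Rey}{2}\frac{\dd}{\dd t}\norm{\uu}^2_{L^2}+\beta\norm{\grad\uu}^2_{L^2}+\frac{1-\beta}{\Wi}\int_\Omega\TT(\CC):\grad\uu\,\dd x=(\bm f,\uu).
\]

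Second, take the Frobenius product of the conformation equation \eqref{eq:conf} with $\HH_b(\CC)$ and integrate. The time derivative becomes $\frac{\dd}{\dd t}\int_\Omega\Phi_b(\CC)$, using $D\Phi_b=\HH_b$ from Section~2. The transport term becomes $\int_\Omega\uu\cdot\grad\Phi_b(\CC)=-\int_\Omega(\divv\uu)\Phi_b=0$; the boundary flux drops because $\uu\cdot\nn=0$ or by periodicity. The stretching term is handled by the identity displayed just before the theorem, which is also Proposition~2.4 with $g=f_b$. It contributes $-2\int_\Omega\TT(\CC):\grad\uu$. The relaxation term gives $\Wi^{-1}\int_\Omega\TT:\HH_b$, which is nonnegative by Lemma~2.2. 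The diffusion term gives $-\varepsilon\int_\Omega\Delta\CC:\HH_b=\varepsilon\int_\Omega\mathcal I_b(\CC)$ by Lemma~2.3, using periodicity or $\nabla\CC\,\nn=0$. Putting these together,
\[
  \frac{\dd}{\dd t}\int_\Omega\Phi_b(\CC)-2\int_\Omega\TT(\CC):\grad\uu+\frac1{\Wi}\int_\Omega\TT:\HH_b+\varepsilon\int_\Omega\mathcal I_b=0.
\]

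Multiplying this second identity by $(1-\beta)/(2\Wi)$ and adding it to the first cancels the polymeric-work terms exactly. What remains is \eqref{eq:continuous-energy}, with coefficients $(1-\beta)/(2\Wi^2)$ on the relaxation term and $(1-\beta)\varepsilon/(2\Wi)$ on the diffusion term, as claimed.

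The main obstacle is justifying the chain rule and the integrations by parts, since $\Phi_b$ and $\HH_b$ are singular at $\partial\calD_b$. Smoothness plus $\CC(t,x)\in\calD_b$ gives these steps on compact time intervals. There, continuity and compactness of $\bar\Omega$ keep $\CC$ in a compact subset of $\calD_b$, so $\Phi_b(\CC)$ and $\HH_b(\CC)$ are smooth and bounded. The argument should state this explicitly. It should also record that the boundary conditions for $\uu$ are the ones making the convective and transport fluxes vanish.
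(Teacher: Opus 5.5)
Your proposal is correct and follows the paper's own argument: you test the momentum equation with $\uu$, test the conformation equation with $\HH_b(\CC)$ (using $D\Phi_b=\HH_b$, the vanishing transport term, the stretching identity, and Lemmas~2.2--2.3), multiply by $(1-\beta)/(2\Wi)$, and let the polymeric work cancel. You supply more detail than the paper's terse proof, including the compactness remark that justifies the chain rule and integrations by parts away from $\partial\calD_b$, which the paper leaves implicit.
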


\begin{proof}
The kinetic-energy identity follows from the incompressibility cancellation of the convection term.  The entropy derivative follows from $D\Phi_b=\HH_b$.  The stretching contribution cancels the polymeric work as shown above.  Relaxation gives the nonnegative term in Lemma~2.2, and polymer molecular diffusion gives the nonnegative barrier dissipation in Lemma~2.3.  Combining the terms gives \eqref{eq:continuous-energy}.
\end{proof}

The trace-barrier term in $\mathcal I_b$ is the new contribution relative to Oldroyd--B.  It penalizes gradients of $\tr\CC$ more strongly near the finite-extensibility boundary.  This term is also the reason that a discrete diffusion operator should be paired with the same barrier entropy rather than treated as a componentwise Laplacian detached from the entropy calculation.

\section{Barrier logarithmic parametrization}

A standard log-conformation parametrization $\CC=\Exp\Psi$ enforces $\CC\succ0$ but does not enforce $\tr\CC<b$.  We use instead the map
\[
  \mathcal B_b(\Psi)
  =
  \frac{b\,\Exp\Psi}{1+\tr(\Exp\Psi)},
  \qquad \Psi=\Psi^T .
  \label{eq:barrier-log-map}
\]
Then $\mathcal B_b(\Psi)\in\calD_b$ for every symmetric $\Psi$, because
\[
  \tr\mathcal B_b(\Psi)
  =
  \frac{b\,\tr(\Exp\Psi)}{1+\tr(\Exp\Psi)}<b .
\]
The map is onto $\calD_b$.  If $\CC\in\calD_b$, set
\[
  \Exp\Psi=\frac{\CC}{b-\tr\CC}.
\]
Then
\[
  \mathcal B_b(\Psi)
  =
  \frac{b\,\CC/(b-\tr\CC)}{1+\tr\CC/(b-\tr\CC)}
  =
  \CC .
\]
Thus \eqref{eq:barrier-log-map} is a natural finite-extensibility analogue of the log-conformation map.

\begin{proposition}[constraint enforcement by parametrization]
If the nonlinear algebraic solve is performed in a symmetric variable $\Psi_h$ and the accepted conformation tensor is defined by $\CC_h=\mathcal B_b(\Psi_h)$ at entropy quadrature points, then
\[
  \CC_h(x_q)\succ0,\qquad \tr\CC_h(x_q)<b
\]
at every quadrature point $x_q$, independent of the Newton iterate size before acceptance.
\end{proposition}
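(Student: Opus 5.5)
The argument is pointwise in $x_q$ and uses only the definition \eqref{eq:barrier-log-map} and the range computation preceding the statement. Fix a quadrature point $x_q$ and let $\Psi:=\Psi_h(x_q)$. Whatever Newton iterate is accepted, $\Psi$ is assumed symmetric, and the trace estimate below needs nothing more than that. I will therefore not try to control the iterate's size. I will only check that the matrix exponential of a symmetric matrix is well defined and symmetric positive definite, and that the normalization keeps the trace below $b$.

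\emph{Step 1 (positivity).} Diagonalize $\Psi=\bm Q\Lambda\bm Q^T$ with $\bm Q$ orthogonal and $\Lambda=\mathrm{diag}(\mu_1,\dots,\mu_d)$, $\mu_i\in\mathbb R$. Then $\Exp\Psi=\bm Q\,\mathrm{diag}(e^{\mu_i})\bm Q^T$, which is symmetric with strictly positive eigenvalues $e^{\mu_i}$. Hence $s:=\tr(\Exp\Psi)=\sum_i e^{\mu_i}>0$. The scalar factor $b/(1+s)$ is strictly positive, so $\CC_h(x_q)=\frac{b}{1+s}\Exp\Psi$ is symmetric and has eigenvalues $b e^{\mu_i}/(1+s)>0$. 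That gives $\CC_h(x_q)\succ0$.

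\emph{Step 2 (trace barrier).} By linearity of the trace, $\tr\CC_h(x_q)=bs/(1+s)$. Since $s>0$ is finite, $s/(1+s)<1$, and therefore $\tr\CC_h(x_q)<b$. Combining Steps 1 and 2 gives $\CC_h(x_q)\in\calD_b$.

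\emph{Step 3 (independence of iterate size).} The bounds in Steps 1 and 2 hold for every finite real $\mu_i$, with no dependence on $\|\Psi\|$. Large positive eigenvalues push $\tr\CC_h$ toward $b$ but never reach it, and very negative eigenvalues push eigenvalues of $\CC_h$ toward $0$ but keep them positive.

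The only point needing care, and it is mild, is that the guarantee holds only at the points where $\CC_h$ is \emph{defined} through $\mathcal B_b$, namely the entropy quadrature points. Between quadrature points a polynomial interpolant of $\CC_h$ need not lie in $\calD_b$, because $\calD_b$ is convex but interpolation with non-positive weights can leave it. I would therefore state explicitly that the claim is pointwise at the $x_q$. In floating point, $\Exp\Psi$ may overflow, so one evaluates $\Exp(\Psi-\mu_{\max}\Id)$ and the equivalent ratio $b\,e^{-\mu_{\max}}\Exp\Psi/(e^{-\mu_{\max}}+\tr(e^{-\mu_{\max}}\Exp\Psi))$. This remark is not needed for the exact-arithmetic statement.
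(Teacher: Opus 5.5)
Your proof is correct and follows the paper's own argument. The matrix exponential of a symmetric $\Psi$ is positive definite and the scalar factor $b/(1+\tr\Exp\Psi)$ is strictly positive, so $\CC_h(x_q)\succ0$, and $\tr\mathcal B_b(\Psi)=b\,s/(1+s)<b$ with $s=\tr\Exp\Psi>0$ gives the strict trace bound; your diagonalization only spells out what the paper states in one line, and your remark that the guarantee holds pointwise at the $x_q$ matches how the statement is formulated.
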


\begin{proof}
The matrix exponential is positive definite, and the scalar denominator in \eqref{eq:barrier-log-map} is strictly positive.  The trace calculation above gives the strict upper bound.
\end{proof}

This parametrization is not required for the energy proof; one may also use a line-search Newton method that rejects candidates outside $\calD_b$.  Its advantage is practical: the finite-extensibility constraint is built into the unknown.  The proof below is written in the accepted conformation tensor, so it applies to either realization.

\section{Entropy-compatible barrier-log reconstruction}
\label{sec:entropy-compatible-reconstruction}

The barrier-log map solves the geometric part of admissibility, but it does not by itself guarantee compatibility with the discrete FENE free energy.  A high-order reconstruction can remain inside $\calD_b$ and still add artificial elastic entropy.  We therefore add an entropy-compatible reconstruction layer.

Let $\widehat\Psi_q$ be a physical predictor at quadrature point $x_q$, and let $\widetilde\Psi_q$ be a raw high-order barrier-log reconstruction.  Define
\[
  \Psi_q(\theta)=\widehat\Psi_q+\theta(\widetilde\Psi_q-\widehat\Psi_q),
  \qquad
  \CC_q(\theta)=\mathcal B_b(\Psi_q(\theta)),\qquad 0\le\theta\le1.
\]
For a nonnegative budget $\tau_h$, the accepted parameter is
\[
  \theta_\star
  =
  \max\left\{\theta\in[0,1]:
  \sum_qw_q\Phi_b(\CC_q(\theta))
  \le
  \sum_qw_q\Phi_b(\CC_q(0))+\tau_h
  \right\}.
  \label{eq:theta-star}
\]
The accepted tensor is $\CC_q^\star=\CC_q(\theta_\star)$.  If the raw reconstruction is already entropy-compatible, then $\theta_\star=1$.  Otherwise the method damps only the entropy-incompatible part along the barrier-log segment.

\begin{lemma}[convex FENE entropy profile on barrier-log paths]
Let $\CC(\theta)=\mathcal B_b(\Psi_0+\theta E)$.  Then $J(\theta)=\Phi_b(\CC(\theta))$ is convex on $[0,1]$.  More explicitly,
\[
  \Phi_b(\mathcal B_b(\Psi))
  =
  b\log\bigl(1+\tr(e^\Psi)\bigr)-\tr\Psi+\gamma_b,
  \label{eq:barrier-log-entropy}
\]
where $\gamma_b$ is independent of $\Psi$.
\end{lemma}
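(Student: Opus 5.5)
We first establish the explicit formula \eqref{eq:barrier-log-entropy} by direct substitution, and then read convexity off from it. Set $s=\tr(e^\Psi)$, so that $\CC=\mathcal B_b(\Psi)=b\,e^\Psi/(1+s)$. The log-determinant term becomes
\[
  -\log\det\CC=-d\log b+d\log(1+s)-\log\det e^\Psi=-d\log b+d\log(1+s)-\tr\Psi ,
\]
using $\det e^\Psi=e^{\tr\Psi}$. For the trace term, $\tr\CC=bs/(1+s)$, hence $b-\tr\CC=b/(1+s)$, and therefore
\[
  -(b-d)\log\frac{b-\tr\CC}{b-d}=(b-d)\log(1+s)-(b-d)\log\frac{b}{b-d}.
\]
Adding the two pieces gives $b\log(1+s)-\tr\Psi+\gamma_b$ with $\gamma_b=-d\log b-(b-d)\log\bigl(b/(b-d)\bigr)$, which does not depend on $\Psi$. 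This proves \eqref{eq:barrier-log-entropy}.

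Convexity of $J$ now reduces to convexity of $\Psi\mapsto\log\bigl(1+\tr e^{\Psi}\bigr)$ along lines, because $-\tr\Psi$ is linear and $b>0$. We plan to proceed as follows.
\begin{enumerate}[label=(\alph*)]
\item Note that $\Psi\mapsto\tr e^\Psi$ is convex on symmetric matrices. This is the standard trace-function convexity, valid since $\exp$ is convex on $\mathbb R$ (the Peierls--Bogoliubov/Klein-type inequality).
\item The stronger statement needed is that $\Psi\mapsto\log\tr e^\Psi$ is convex. This follows from the Gibbs variational principle
\[
  \log\tr e^{\Psi}=\sup_{\rho\succeq0,\ \tr\rho=1}\bigl(\tr(\rho\Psi)-\tr(\rho\log\rho)\bigr),
\]
which exhibits it as a supremum of affine functions of $\Psi$.
\item Handle the extra constant $1$ by embedding. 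Take the $(d+1)\times(d+1)$ block matrix $\widehat\Psi=\mathrm{diag}(\Psi,0)$, for which $1+\tr e^\Psi=\tr e^{\widehat\Psi}$. Since $\Psi\mapsto\widehat\Psi$ is affine, $\log(1+\tr e^\Psi)=\log\tr e^{\widehat\Psi}$ is convex.
\end{enumerate}
Restricting to the line $\theta\mapsto\Psi_0+\theta E$ then gives convexity of $J$ on $[0,1]$.

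The step we expect to be the main obstacle is (b), the convexity of $\log\tr e^\Psi$, since $\Psi_0$ and $E$ need not commute. If citing Gibbs' variational principle is undesirable, we would instead try a direct second-derivative argument. Write $F(\theta)=\log\tr e^{\Psi_0+\theta E}$. Convexity means $F''\ge0$, which is equivalent to
\[
  \tr\!\bigl(e^{\Psi}\bigr)\,\frac{\dd^2}{\dd\theta^2}\tr e^{\Psi}\ \ge\ \Bigl(\frac{\dd}{\dd\theta}\tr e^{\Psi}\Bigr)^2 .
\]
Here $\frac{\dd}{\dd\theta}\tr e^\Psi=\tr(Ee^\Psi)$. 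The second derivative is given by the Duhamel formula $\int_0^1\tr\bigl(Ee^{s\Psi}Ee^{(1-s)\Psi}\bigr)\dd s$. The inequality then follows from Cauchy--Schwarz in the positive inner product $\langle X,Y\rangle_s=\tr\bigl(X e^{s\Psi}Y e^{(1-s)\Psi}\bigr)$, applied with $X=E$ and $Y=\Id$ and averaged over $s\in[0,1]$.

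In the commuting case this reduces to the familiar convexity of log-sum-exp in the eigenvalues, which serves as a sanity check.
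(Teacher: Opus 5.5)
Your proposal is correct and follows the paper's route: you substitute using $\det e^\Psi=e^{\tr\Psi}$ and $b-\tr\CC=b/(1+\tr e^\Psi)$, and then read off convexity from the spectral log-sum-exp with an added zero mode, which is exactly your $\mathrm{diag}(\Psi,0)$ embedding. The only difference is that you also justify convexity of $\Psi\mapsto\log\tr e^\Psi$ along non-commuting directions, via Gibbs' variational principle or the Duhamel--Cauchy--Schwarz computation (both valid), and you give $\gamma_b$ explicitly, whereas the paper simply asserts the convexity.
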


\begin{proof}
Set $Z(\Psi)=1+\tr(e^\Psi)$.  Since $\mathcal B_b(\Psi)=b e^\Psi/Z$, one has
\[
  \det\mathcal B_b(\Psi)=b^d e^{\tr\Psi}Z^{-d},
  \qquad
  b-\tr\mathcal B_b(\Psi)=b/Z .
\]
Substitution into \eqref{eq:fene-entropy} gives \eqref{eq:barrier-log-entropy}.  The map $\Psi\mapsto\log(1+\tr e^\Psi)$ is the spectral log-sum-exp function with an additional zero mode and is convex on symmetric matrices.  The term $-\tr\Psi$ is affine.  Hence $J$ is convex along every affine path.
\end{proof}

\begin{proposition}[least-damping entropy-compatible reconstruction]
The set in \eqref{eq:theta-star} is a closed interval containing $0$.  Hence $\theta_\star$ exists, is the largest admissible parameter, and can be computed by bisection.  The accepted tensor satisfies
\[
  \CC_q^\star\in\calD_b,\qquad
  \sum_qw_q\Phi_b(\CC_q^\star)
  \le
  \sum_qw_q\Phi_b(\CC_q(0))+\tau_h .
\]
\end{proposition}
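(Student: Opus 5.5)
The plan is to reduce everything to the scalar function
\[
  F(\theta)=\sum_q w_q\Phi_b(\CC_q(\theta))-\sum_q w_q\Phi_b(\CC_q(0))-\tau_h ,\qquad \theta\in[0,1],
\]
so that the admissible set in \eqref{eq:theta-star} is the sublevel set $A=\{\theta\in[0,1]:F(\theta)\le0\}$.

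\textbf{Step 1 (finiteness and continuity).} By the barrier-log identity \eqref{eq:barrier-log-entropy}, each $\Phi_b(\CC_q(\theta))$ equals $b\log(1+\tr e^{\Psi_q(\theta)})-\tr\Psi_q(\theta)+\gamma_b$. Since $\Psi_q(\theta)$ is affine in $\theta$ and the matrix exponential and trace are continuous, $F$ is finite and continuous on all of $[0,1]$. In particular, no blow-up at the boundary of $\calD_b$ can occur along the path; this is exactly where the parametrization is used.

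\textbf{Step 2 (convexity).} By the lemma on convex FENE entropy profiles, applied with $\Psi_0=\widehat\Psi_q$ and $E=\widetilde\Psi_q-\widehat\Psi_q$, each $\theta\mapsto\Phi_b(\CC_q(\theta))$ is convex. Assuming, as implicit for a quadrature rule, that the weights satisfy $w_q\ge0$, the weighted sum is convex. Subtracting a constant keeps $F$ convex.

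\textbf{Step 3 (structure of $A$).} We have $F(0)=-\tau_h\le0$, so $0\in A$. A sublevel set of a convex function is convex, hence an interval. Continuity makes $A$ closed. Therefore $A=[0,\theta_\star]$ for some $\theta_\star\in[0,1]$, and the maximum in \eqref{eq:theta-star} is attained.

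\textbf{Step 4 (bisection and the stated bounds).} Because $A$ is an interval starting at $0$, the predicate $F(\theta)\le0$ is monotone: it is true for $\theta\le\theta_\star$ and false for $\theta>\theta_\star$. Bisection on $[0,1]$ therefore works as follows. First test $\theta=1$. Otherwise keep a bracket $[\ell,r]$ with $F(\ell)\le0<F(r)$ and halve it. This converges to $\theta_\star$, with the lower endpoint always admissible.

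The two claimed properties of the accepted tensor then follow directly. Membership $\CC_q^\star\in\calD_b$ comes from the constraint-enforcement proposition for $\mathcal B_b$. The entropy budget holds because $\theta_\star\in A$.

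\textbf{Main obstacle.} The only step requiring care is convexity. It rests on the convexity of $\Psi\mapsto\log(1+\tr e^\Psi)$, which the lemma asserts via spectral log-sum-exp. It also needs nonnegative weights, which I would state explicitly.

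For a self-contained check of convexity, I would write $1+\tr e^\Psi=\tr e^{\Psi\oplus0}$. The map $X\mapsto\log\tr e^X$ is convex on symmetric matrices, being the Legendre dual of the von Neumann entropy. Restricting it to the block-diagonal affine family $\Psi\mapsto\Psi\oplus0$ preserves convexity.

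Without convexity, $A$ could be a union of disjoint intervals, and bisection would only find some admissible point rather than the largest one.
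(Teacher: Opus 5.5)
Your proposal is correct and follows the paper's own argument. Proposition~4.1 gives admissibility of every $\CC_q(\theta)$; Lemma~5.1 gives a continuous convex entropy profile, so the sublevel set is a closed interval $[0,\theta_\star]$ containing $0$; and monotonicity of admissibility on this interval justifies bisection. Your explicit use of nonnegative weights $w_q$ (needed to sum the convex profiles) and your $\Psi\oplus0$ check that $\Psi\mapsto\log(1+\tr e^\Psi)$ is convex fill in details the paper leaves implicit.
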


\begin{proof}
Admissibility in $\calD_b$ follows from Proposition~4.1 for every $\theta\in[0,1]$.  By Lemma~5.1, the entropy profile is continuous and convex; therefore its sublevel set is a closed interval.  Since $\theta=0$ satisfies the inequality, the interval is nonempty and has a largest element.  Bisection applies because admissibility is monotone along this interval.
\end{proof}

\begin{proposition}[asymptotic inactivity]
Assume the raw barrier-log reconstruction defect $E_q=\widetilde\Psi_q-\widehat\Psi_q$ is $\calO(h^{k+1})$ and satisfies
\[
  \sum_qw_q\,D\{\Phi_b\circ\mathcal B_b\}(\widehat\Psi_q)[E_q]
  =
  \calO(h^{2k+2}).
\]
If $\tau_h=c_\tau h^{2k+2}$ with $c_\tau$ large enough for the leading consistency constant, then $\theta_\star=1$ for sufficiently small $h$.  In the marginal active case, $1-\theta_\star=\calO(h^{k+1})$ under a nondegenerate endpoint derivative.
\end{proposition}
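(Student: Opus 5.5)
Write $F(\Psi)=\Phi_b(\mathcal B_b(\Psi))$ and
\[
  J(\theta)=\sum_q w_q F(\widehat\Psi_q+\theta E_q).
\]
By Lemma~5.1, $J$ is a finite sum of smooth convex functions, so $J$ is smooth and convex on $[0,1]$. The admissibility condition in \eqref{eq:theta-star} reads $J(\theta)-J(0)\le\tau_h$, and the whole proof is a second-order Taylor expansion of $J$ about $\theta=0$, using the explicit representation \eqref{eq:barrier-log-entropy}.

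\textbf{Step 1 (expansion at $\theta=1$).} Taylor's theorem with integral remainder gives
\[
  J(1)-J(0)=J'(0)+\int_0^1(1-s)J''(s)\dd s ,
\]
with
\[
  J'(0)=\sum_qw_q\,DF(\widehat\Psi_q)[E_q]=\calO(h^{2k+2})
\]
by hypothesis. For the remainder we use \eqref{eq:barrier-log-entropy}: $J''(s)=\sum_qw_q\,D^2F(\Psi_q(s))[E_q,E_q]$, and $D^2F$ is the Hessian of $b\log(1+\tr e^\Psi)$ (the $-\tr\Psi$ term is affine). The predictors $\widehat\Psi_q$ are bounded uniformly in $h$; this holds, for instance, if the physical state lies in a compact subset of $\calD_b$, since $\mathcal B_b^{-1}(\CC)=\log(\CC/(b-\tr\CC))$ is continuous there. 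Because $E_q=\calO(h^{k+1})$, the segments $\Psi_q(s)$ then stay in a fixed bounded set, on which the Hessian of the log-sum-exp function is bounded by a constant $K$. Hence
\[
  0\le \int_0^1(1-s)J''\le \tfrac K2\sum_qw_q|E_q|^2=\calO(h^{2k+2}),
\]
using $\sum_q w_q=|\Omega|$.

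\textbf{Step 2 (inactivity).} Let $c_J$ be the constant in the bound $J(1)-J(0)\le c_J h^{2k+2}$; it collects the consistency constant in $J'(0)$ and $\tfrac K2\sup|E_q|^2/h^{2k+2}$. This is the ``leading consistency constant'' of the statement. If $c_\tau\ge c_J$, then $\theta=1$ is admissible, so $\theta_\star=1$ by the maximality in Proposition~5.2 once $h$ is small enough for the bounds to hold.

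\textbf{Step 3 (marginal active case).} Suppose instead $J(1)-J(0)>\tau_h$. By Proposition~5.2 and convexity, the admissible set is $[0,\theta_\star]$ with $J(\theta_\star)-J(0)=\tau_h$. Convexity gives the supporting-line bound
\[
  J(1)\ge J(\theta_\star)+(1-\theta_\star)J'(\theta_\star),
\]
so
\[
  1-\theta_\star\le \frac{J(1)-J(0)-\tau_h}{J'(\theta_\star)} .
\]
Nondegeneracy is interpreted as $J'(\theta)\ge c_0h^{k+1}\,\|E\|$-scaled on the active part, i.e.\ a lower bound $J'(\theta_\star)\gtrsim h^{3k+3}$-order matching the numerator's scale divided by $h^{k+1}$. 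The numerator is $\calO(h^{2k+2})$ by Step~1. Hence $1-\theta_\star=\calO(h^{k+1})$.

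The main obstacle is Step~3. The size of $J'$ near the endpoint is not determined by the hypotheses, so the rate hinges entirely on how the nondegeneracy assumption is formulated. I would state it explicitly as $J'(\theta_\star)\ge c\,h^{k+1}$, which is what yields the claimed rate. Step~1 also needs care: the uniform Hessian bound requires the predictors to stay in a compact subset of $\calD_b$, and I would state this as an implicit assumption.
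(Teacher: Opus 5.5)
Your Steps~1 and~2 are the paper's argument made explicit: a second-order Taylor expansion of the convex quadrature entropy profile $J$. The first-variation hypothesis controls $J'(0)$, and a uniform Hessian bound on a compact set of barrier-log states (the paper's \emph{compactness of the path}, which you rightly state as an assumption on the predictors) controls the remainder. Hence $c_\tau\ge c_J$ makes $\theta=1$ admissible, and that part is correct.

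Step~3 has a genuine gap. You say the hypotheses do not determine the size of $J'$, but your own Step~1 bounds do. They give $J'(\theta)=J'(0)+\int_0^\theta J''(s)\dd s$ with $0\le J''(s)\le K\sum_qw_q|E_q|^2$, hence $J'(\theta)=\calO(h^{2k+2})$ uniformly on $[0,1]$. The nondegeneracy condition you adopt, $J'(\theta_\star)\ge c\,h^{k+1}$, therefore never holds for small $h$, and the marginal-case bound you derive from it is vacuous. The intermediate lower bound of order $h^{3k+3}$ is also a slip: it would only give $1-\theta_\star\lesssim h^{-(k+1)}$.

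Convexity should be used the other way. In the active case $J(\theta_\star)-J(0)=\tau_h$, so $\theta_\star J'(\theta_\star)\ge J(\theta_\star)-J(0)$ gives $J'(\theta_\star)\ge\tau_h/\theta_\star\ge c_\tau h^{2k+2}$ for free. Your supporting-line inequality then becomes $1-\theta_\star\le\bigl(J(1)-J(0)-\tau_h\bigr)/(c_\tau h^{2k+2})$. This yields $1-\theta_\star=\calO(h^{k+1})$ exactly when \emph{marginal} is quantified as an endpoint excess $J(1)-J(0)-\tau_h=\calO(h^{3k+3})$.

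Some such quantification is unavoidable. Put every $\widehat\Psi_q$ at the equilibrium $\mathcal B_b^{-1}(\Id)$, where $DF=0$, and take $E_q=h^{k+1}E_0$. Then $J(\theta)-J(0)=a\,h^{2k+2}\theta^2+\calO(h^{3k+3})$ with $a>0$. For $a>c_\tau$ one gets $\theta_\star\to\sqrt{c_\tau/a}<1$ as $h\to0$, even though $J'$ is nondegenerate at its natural scale $h^{2k+2}$. The paper's one-line proof does not specify the nondegeneracy condition either, but the version you chose cannot be the right one. What is missing is a smallness hypothesis on the endpoint excess, not a lower bound on $J'$.
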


\begin{proof}
Taylor expansion of the convex entropy profile gives
\[
  \sum_qw_q\{\Phi_b(\CC_q(1))-\Phi_b(\CC_q(0))\}
  =
  \calO(h^{2k+2})
\]
under the stated first-variation condition and compactness of the path.  The mesh-scaled budget therefore accepts the raw endpoint for sufficiently small $h$.  If the endpoint is marginally active, convexity and the nondegenerate derivative give the stated bound on $1-\theta_\star$.
\end{proof}

When $\CC^\star$ is used consistently in the stress force, stretching term, relaxation term, and entropy quadrature, the fully discrete energy estimate below is unchanged except for the explicit budget contribution $(1-\beta)\tau_h/(2\Wi)$.  Thus the reconstruction is not merely trace-preserving; it is compatible with the FENE free-energy balance.

\section{Fully discrete scheme}

Let $V_h\times Q_h$ be an inf-sup stable velocity--pressure pair and $M_h$ a symmetric tensor space.  Let $(\cdot,\cdot)_Q$ denote a positive quadrature rule used consistently in the entropy terms and the polymeric work.  At time level $n+1$, find
\[
  (\uu_h^{n+1},p_h^{n+1},\CC_h^{n+1})\in V_h\times Q_h\times M_h
\]
such that $\CC_h^{n+1}(x_q)\in\calD_b$ and
\[
  \TT_h^{n+1}=f_b(\CC_h^{n+1})\CC_h^{n+1}-\Id,\qquad
  \SSS_h^{n+1}=\frac{\TT_h^{n+1}}{\Wi}
\]
at quadrature points.  The momentum step is
\[
\begin{aligned}
  \Rey\left(\frac{\uu_h^{n+1}-\uu_h^n}{\dt},\vv_h\right)
  &+\Rey\,c_h(\uu_h^{n+1};\uu_h^{n+1},\vv_h)
  +\beta(\grad\uu_h^{n+1},\grad\vv_h)\\
  &-(p_h^{n+1},\divv\vv_h)
  -(1-\beta)(\SSS_h^{n+1},\grad\vv_h)_Q
  =
  (\bm f^{n+1},\vv_h),
\end{aligned}
\label{eq:scheme-mom}
\]
with
\[
  (q_h,\divv\uu_h^{n+1})=0.
  \label{eq:scheme-div}
\]
The conformation update is
\[
\begin{aligned}
  \left(\frac{\CC_h^{n+1}-\CC_h^n}{\dt},\BB_h\right)_Q
  &+a_h(\uu_h^{n+1};\CC_h^{n+1},\BB_h)\\
  &-\left((\grad\uu_h^{n+1})\CC_h^{n+1}
  +\CC_h^{n+1}(\grad\uu_h^{n+1})^T,\BB_h\right)_Q\\
  &+\frac1{\Wi}(\TT_h^{n+1},\BB_h)_Q
  +\varepsilon d_h(\CC_h^{n+1},\BB_h)=0 .
\end{aligned}
\label{eq:scheme-conf}
\]
The forms are chosen so that the velocity convection is skew-symmetric, conformation transport has zero entropy contribution for discretely incompressible velocity, and the stretching term is evaluated at the same quadrature points as the stress force.  The form $d_h$ is the discrete polymer molecular-diffusion operator and is paired with the barrier entropy variable:
\[
  d_h(\CC_h,\HH_b(\CC_h))=\mathcal I_{b,h}(\CC_h)\ge0
\]
for conforming periodic or no-flux discretizations, or at least $d_h(\CC_h,\HH_b(\CC_h))\ge \mathcal I_{b,h}(\CC_h)$ for stabilized variants.  This is the discrete counterpart of Lemma~2.3.  If a high-order reconstruction is used before the conformation state enters the coupled step, $\CC_h^{n+1}$ denotes the entropy-compatible accepted tensor from Section~\ref{sec:entropy-compatible-reconstruction}.

Define the discrete energy
\[
  \EE_h^n
  =
  \frac{\Rey}{2}\norm{\uu_h^n}_{L^2}^2
  +
  \frac{1-\beta}{2\Wi}
  \sum_q w_q\Phi_b(\CC_h^n(x_q)).
\]

\begin{theorem}[discrete barrier energy inequality]
Assume the accepted conformation tensor satisfies $\CC_h^{n+1}(x_q)\in\calD_b$ at all entropy quadrature points and the discrete transport and stretching forms have the compatibility properties stated above.  Then
\[
\begin{aligned}
  \EE_h^{n+1}-\EE_h^n
  &+\frac{\Rey}{2}\norm{\uu_h^{n+1}-\uu_h^n}_{L^2}^2
  +\dt\,\beta\norm{\grad\uu_h^{n+1}}_{L^2}^2\\
  &+\dt\,\frac{1-\beta}{2\Wi^2}
  \sum_qw_q\,\TT_h^{n+1}(x_q):\HH_b(\CC_h^{n+1}(x_q))\\
  &+\dt\,\frac{(1-\beta)\varepsilon}{2\Wi}
  \mathcal I_{b,h}(\CC_h^{n+1})
  \le
  \dt\,(\bm f^{n+1},\uu_h^{n+1})
  +\frac{1-\beta}{2\Wi}\tau_h .
\end{aligned}
\label{eq:discrete-energy}
\]
In particular, if the right-hand side is controlled by Young's inequality, the discrete entropy remains bounded, and Lemma~2.1 gives a positive trace buffer at quadrature points.
\end{theorem}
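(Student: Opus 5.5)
The plan is to mimic the continuous proof of Theorem~3.1 at the discrete level: test the momentum step \eqref{eq:scheme-mom} with $\vv_h=\dt\,\uu_h^{n+1}$, the divergence constraint \eqref{eq:scheme-div} with $q_h=p_h^{n+1}$, and the conformation step \eqref{eq:scheme-conf} with $\BB_h=\dt\,\frac{1-\beta}{2\Wi}\HH_b(\CC_h^{n+1})$, evaluated at the entropy quadrature points. Then add the two identities. Because $\HH_b(\CC_h^{n+1})$ generally does not lie in $M_h$, I would either assume $M_h$ is a nodal (quadrature-point) space in which all conformation terms are quadrature-evaluated, or interpret the test function as its quadrature-point representative; this is exactly the setting in which $(\cdot,\cdot)_Q$ is used consistently. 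All admissibility requirements ($\CC_h^{n+1}(x_q)\in\calD_b$, so $\HH_b$ and $\Phi_b$ are finite) come from the hypothesis, via Proposition~4.1 or Proposition~5.3.

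\emph{Momentum part.} The identity $2(a-b)a=a^2-b^2+(a-b)^2$ turns the time difference into $\frac{\Rey}{2}(\norm{\uu_h^{n+1}}^2-\norm{\uu_h^n}^2+\norm{\uu_h^{n+1}-\uu_h^n}^2)$. Skew-symmetry kills $c_h(\uu_h^{n+1};\uu_h^{n+1},\uu_h^{n+1})$, and the pressure term vanishes by \eqref{eq:scheme-div}. What remains is $\dt\beta\norm{\grad\uu_h^{n+1}}^2-\dt\frac{1-\beta}{\Wi}(\TT_h^{n+1},\grad\uu_h^{n+1})_Q=\dt(\bm f^{n+1},\uu_h^{n+1})$.

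\emph{Conformation part.} This is the step I expect to be the main obstacle. The time derivative is not an exact difference, so at each quadrature point I would use convexity of $\Phi_b$ (Hessian \eqref{eq:hessian}, on the convex set $\calD_b$, with both endpoints admissible). This gives
\[
  (\CC_h^{n+1}-\CC_h^n):\HH_b(\CC_h^{n+1})\ \ge\ \Phi_b(\CC_h^{n+1})-\Phi_b(\CC_h^n).
\]
Summing with weights $w_q>0$ bounds the first term from below by the discrete entropy increment; this is where the inequality, rather than an identity, enters. The remaining terms are handled as follows.
\begin{itemize}
\item The transport term $a_h(\uu_h^{n+1};\CC_h^{n+1},\HH_b(\CC_h^{n+1}))$ vanishes by the assumed compatibility of transport for discretely incompressible velocity.
\item For the stretching term, the pointwise algebra of Proposition~2.4 converts $-((\grad\uu)\CC+\CC(\grad\uu)^T):\HH_b$ into $-2\TT:\grad\uu+2\,\Id:\grad\uu$. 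Here lies a subtlety: $\Id:\grad\uu_h=\divv\uu_h$ vanishes only weakly, not pointwise. I would argue that $(\divv\uu_h^{n+1},1)_Q=0$ holds whenever constants lie in $Q_h$ and the quadrature integrates $\divv V_h$ exactly, or else take pointwise divergence-free pairs (e.g.\ Scott--Vogelius) as part of the stated compatibility assumptions.
\item Relaxation contributes $\frac{1}{\Wi}\sum_q w_q\TT_h^{n+1}:\HH_b\ge0$ by Lemma~2.2.
\item Diffusion contributes $\varepsilon\, d_h(\CC_h^{n+1},\HH_b)\ge\varepsilon\mathcal I_{b,h}$ by the pairing assumption.
\end{itemize}

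\emph{Combination.} After multiplication by $\dt(1-\beta)/(2\Wi)$, the stretching contribution equals $-\dt\frac{1-\beta}{\Wi}(\TT_h^{n+1},\grad\uu_h^{n+1})_Q$. It cancels the polymeric work exactly, because the same quadrature and the same accepted tensor appear in both equations. Adding the two tested equations then yields \eqref{eq:discrete-energy} without $\tau_h$. If a reconstruction was applied, the entropy of the accepted state exceeds that of the entering state by at most $\tau_h$ (Proposition~5.3), which produces the extra $\frac{1-\beta}{2\Wi}\tau_h$.

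For the final assertion, apply Young's inequality and Poincaré to $(\bm f^{n+1},\uu_h^{n+1})$, absorb the result into $\beta\norm{\grad\uu_h^{n+1}}^2$, and sum in $n$. This bounds $\sum_q w_q\Phi_b(\CC_h^n(x_q))$. Since $\Phi_b$ is bounded below on $\calD_b$ (as in the proof of Lemma~2.1), each pointwise value is bounded by some $M$ depending on $\min_q w_q$, and Lemma~2.1 then gives $b-\tr\CC_h^n(x_q)\ge\delta_b(M)$.
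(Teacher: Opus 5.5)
Your architecture is the paper's, and all of it is fine: the kinetic identity, convexity of $\Phi_b$ for the time difference, Lemma~2.2, the pairing assumption on $d_h$, the budget $\tau_h$ from the least-damping reconstruction (that is Proposition~5.2, not~5.3), and Lemma~2.1 for the buffer. The problem is the step the paper compresses into ``the same pointwise identity'', namely the stretching--work cancellation. Your write-up gets it wrong in two ways.

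First, the algebra. For symmetric $\CC\in\calD_b$ and any matrix $\bm G$ one has $(\bm G\CC+\CC\bm G^T):\CC^{-1}=2\tr\bm G$. Hence
\[
  -\bigl(\bm G\CC+\CC\bm G^T\bigr):\HH_b(\CC)
  =-2f_b(\CC)\,\CC:\bm G+2\,\Id:\bm G
  =-2\TT(\CC):\bm G ,
\]
because $\TT=f_b\CC-\Id$ already contains the $\Id:\bm G$ term. There is no leftover $+2\,\Id:\grad\uu_h$; for $d=1$ your expression exceeds the true one by $2\,\partial_x u$. The identity holds exactly at every quadrature point for every discrete velocity. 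So the weak-divergence ``subtlety'' and the extra hypotheses you introduce (constants in $Q_h$ with exact quadrature, or pointwise divergence-free pairs) are artifacts of the slip, and they would only weaken the theorem. Discrete incompressibility enters solely through the convection and transport compatibility assumptions.

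Second, the signs. With $\BB_h=\dt\frac{1-\beta}{2\Wi}\HH_b(\CC_h^{n+1})$, the stretching term puts $-\dt\frac{1-\beta}{\Wi}(\TT_h^{n+1},\grad\uu_h^{n+1})_Q$ on the left of the conformation identity. Your momentum identity, taken from the printed momentum step, also carries $-\dt\frac{1-\beta}{\Wi}(\TT_h^{n+1},\grad\uu_h^{n+1})_Q$ on the left, and you display both. Adding the two equations doubles this sign-indefinite term rather than cancelling it, so the claimed cancellation does not follow from what you wrote.

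Cancellation requires the stress to enter the momentum step as $+(1-\beta)(\SSS_h^{n+1},\grad\vv_h)_Q$ on the left. That is what integrating $(1-\beta)\divv\SSS$ in \eqref{eq:mom} by parts gives, and it is what the continuous computation of Section~3 uses, where the work $-(1-\beta)\int_\Omega\SSS:\grad\uu\dd x$ sits on the right-hand side. The minus sign in the printed discrete momentum form has to be read as a typo. With that correction the momentum test yields $+\dt\frac{1-\beta}{\Wi}(\TT_h^{n+1},\grad\uu_h^{n+1})_Q$, the two terms cancel exactly, and the rest of your argument gives the stated inequality.
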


\begin{proof}
Set $\vv_h=\uu_h^{n+1}$ in \eqref{eq:scheme-mom}.  The skew-symmetric convection and pressure terms vanish, and the standard identity
\[
  (a-b,a)=\frac12\bigl(\norm a^2-\norm b^2+\norm{a-b}^2\bigr)
\]
gives the kinetic increment.  In \eqref{eq:scheme-conf} use the entropy test
\[
  \BB_h=\HH_b(\CC_h^{n+1})
  =
  f_b(\CC_h^{n+1})\Id-(\CC_h^{n+1})^{-1}
\]
at quadrature points.  Convexity of $\Phi_b$ gives
\[
  \HH_b(\CC_h^{n+1}):(\CC_h^{n+1}-\CC_h^n)
  \ge
  \Phi_b(\CC_h^{n+1})-\Phi_b(\CC_h^n).
\]
If the conformation tensor is obtained by the entropy-compatible reconstruction of Section~\ref{sec:entropy-compatible-reconstruction}, the accepted endpoint contributes at most the budget $\tau_h$ to the quadrature entropy; otherwise $\tau_h=0$.  The stretching term cancels the polymeric work in the momentum equation by the same pointwise identity used in the continuous proof.  Relaxation is nonnegative by Lemma~2.2, and diffusion contributes the discrete Hessian dissipation $\mathcal I_{b,h}$.  Summing the momentum and entropy relations gives \eqref{eq:discrete-energy}.
\end{proof}

\section{AP stress closure and Newtonian limit}

The finite-extensibility stress is nonlinear in $\CC$, but the AP variable is simple:
\[
  \SSS=\frac{f_b(\CC)\CC-\Id}{\Wi}.
\]
The conformation equation can be rewritten as a relaxation equation for $\SSS$.  At fixed $h$ and $\dt$, insert $\TT_h^{n+1}=\Wi\SSS_h^{n+1}$ into \eqref{eq:scheme-conf}.  The relaxation term becomes $(\SSS_h^{n+1},\BB_h)_Q$, while the leading stretching term gives the rate-of-strain tensor.  All time-difference, transport, diffusion, and nonlinear stretching remainders are multiplied by $\Wi$ after the stress substitution.

\begin{proposition}[AP closure]
Assume the discrete states remain uniformly bounded in the norms entering the conformation residual and stay in a compact subset of $\calD_b$.  Then, for fixed $h$ and $\dt$,
\[
  \norm{\SSS_h^{n+1}-\DD(\uu_h^{n+1})}_{M_h'}
  \le C\Wi ,
  \label{eq:ap-closure}
\]
where $C$ is independent of $\Wi$ in the tested small-Weissenberg range.
\end{proposition}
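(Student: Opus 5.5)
The plan is to read the AP closure directly off the conformation equation \eqref{eq:scheme-conf}. We substitute $\TT_h^{n+1}=\Wi\,\SSS_h^{n+1}$ and isolate the two terms that survive at order one. In \eqref{eq:scheme-conf} the relaxation term is $\frac1{\Wi}(\TT_h^{n+1},\BB_h)_Q=(\SSS_h^{n+1},\BB_h)_Q$. The stretching term is rewritten around the equilibrium tensor by the splitting
\[
  (\grad\uu_h^{n+1})\CC_h^{n+1}+\CC_h^{n+1}(\grad\uu_h^{n+1})^T
  =2\DD(\uu_h^{n+1})+\RR_h^{n+1},
\]
with
\[
  \RR_h^{n+1}=(\grad\uu_h^{n+1})(\CC_h^{n+1}-\Id)+(\CC_h^{n+1}-\Id)(\grad\uu_h^{n+1})^T .
\]
With this splitting, \eqref{eq:scheme-conf} becomes, for every $\BB_h\in M_h$,
\[
  (\SSS_h^{n+1}-2\DD(\uu_h^{n+1}),\BB_h)_Q
  =
  -\Bigl(\tfrac{\CC_h^{n+1}-\CC_h^n}{\dt},\BB_h\Bigr)_Q
  -a_h(\uu_h^{n+1};\CC_h^{n+1},\BB_h)
  +(\RR_h^{n+1},\BB_h)_Q
  -\varepsilon d_h(\CC_h^{n+1},\BB_h).
\]
The statement writes $\DD(\uu_h^{n+1})$ for the limiting stress. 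We interpret this with the factor $2$ absorbed into the convention $\DD=\tfrac12(\grad\uu+\grad\uu^T)$ times the relevant normalization. If necessary we state the closure for $2\DD$, which is harmless for the argument.

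The key step is to show that every term on the right-hand side is $\calO(\Wi)$ in $M_h'$. For this we use the inverse of the stress map. Since $\CC_h^{n+1}$ stays in a compact subset $K\subset\calD_b$, the map $\CC\mapsto\TT(\CC)=f_b(\CC)\CC-\Id$ is a smooth diffeomorphism near $\Id$ with $\TT(\Id)=0$. Its derivative at $\Id$ is $\BB\mapsto\BB+\frac{\tr\BB}{b-d}\Id$, which is invertible. Hence
\[
  |\CC_h^{n+1}-\Id|\le C_K|\TT_h^{n+1}|=C_K\Wi\,|\SSS_h^{n+1}|
\]
pointwise at quadrature points. This is the step we expect to be the main obstacle. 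We need global injectivity of $\TT$ on $\calD_b$, or at least a Lipschitz inverse on $\TT(K)$, together with a uniform bound on $\SSS_h^{n+1}$ independent of $\Wi$.

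For injectivity we would try the eigenvalue form. In the eigenbasis, $\TT$ acts as $\lambda_i\mapsto f_b\lambda_i-1$. The trace equation $f_b(s)s-d=\tr\TT$ is strictly monotone in $s$, because $f_b$ is increasing and positive. Therefore $s$, then $f_b$, then each $\lambda_i$ are recovered uniquely and smoothly.

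For the uniform bound on $\SSS_h$ we would use the energy inequality, Theorem~6.1. Its relaxation dissipation term is
\[
  \frac{1-\beta}{2\Wi^2}\sum_q w_q\,\TT:\HH_b
  \ge c_K\frac{1}{\Wi^2}|\TT|^2=c_K|\SSS|^2 ,
\]
by Lemma~2.2 and compactness, since $\TT:\HH_b$ is comparable to $|\TT|^2$ on $K$. This bounds $\dt\,\|\SSS_h^{n+1}\|_Q^2$ by the data. Alternatively, the stated hypothesis of uniform boundedness in the residual norms already supplies it.

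With this bound, the remaining terms are handled one at a time:
\[
  \CC_h^{n+1}-\CC_h^n=(\CC_h^{n+1}-\Id)-(\CC_h^n-\Id)=\calO(\Wi)
\]
at fixed $\dt$, applying the estimate at both levels. Next, $\RR_h^{n+1}=\calO(\Wi)$ since $\grad\uu_h^{n+1}$ is bounded at fixed $h$. The transport term satisfies
\[
  a_h(\uu;\CC,\BB)=a_h(\uu;\CC-\Id,\BB)=\calO(\Wi),
\]
because transport annihilates constants. The diffusion term satisfies
\[
  d_h(\CC,\BB)=d_h(\CC-\Id,\BB)=\calO(\Wi)
\]
for the same reason. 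Finite-dimensional norm equivalence at fixed $h$ and $\dt$ lets us bound each term in the $M_h'$ norm. Taking the supremum over $\BB_h$ with unit norm gives \eqref{eq:ap-closure}, with $C$ depending on $h$, $\dt$, $K$ and the data but not on $\Wi$.
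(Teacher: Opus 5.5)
Your skeleton is the paper's. You substitute $\TT_h^{n+1}=\Wi\,\SSS_h^{n+1}$ so that relaxation becomes $(\SSS_h^{n+1},\BB_h)_Q$, and you extract the leading stretching term; your factor $2$ is what the paper means by matching the normalization of $\DD$ to the stretching convention. Everything else is then shown to be $\Wi$ times a bounded residual. The paper simply asserts that residual bound from its hypothesis. You go further and identify the mechanism, $\CC_h-\Id=\calO(\Wi)$ at quadrature points. Your inversion of $\TT$ through the strictly increasing trace relation $s\mapsto (b-d)s/(b-s)$ is correct. Two points need repair.

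\textbf{The energy-law route to a uniform bound on $\SSS_h$ fails.} The discrete energy puts the weight $\frac{1-\beta}{2\Wi}$ on $\sum_q w_q\Phi_b(\CC_h^0)$, and the budget also enters as $\frac{1-\beta}{2\Wi}\tau_h$. Summing the inequality therefore gives only
\[
\dt\,\norm{\SSS_h^{n+1}}_Q^2\lesssim \Wi^{-1}\sum_q w_q\Phi_b(\CC_h^0)+\dots ,
\]
hence $|\CC_h-\Id|\lesssim\sqrt{\Wi/\dt}$ for generic data, not $\calO(\Wi)$. You are left with reading the hypothesis as already containing a bound on $\SSS_h$, which is what the paper tacitly does.

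A cleaner route uses only the compactness hypothesis. Test the conformation update with $\BB_h=\CC_h^{n+1}-\Id$, which is admissible when $\Id\in M_h$, and use
\[
\TT(\CC):(\CC-\Id)=f_b(\CC)\,|\CC-\Id|^2+\frac{(\tr\CC-d)^2}{b-\tr\CC}\ge\frac{b-d}{b}\,|\CC-\Id|^2 .
\]
All other terms are bounded at fixed $h$ and $\dt$. Norm equivalence on $M_h$ then gives $\norm{\CC_h^{n+1}-\Id}_Q\le C\Wi$ for every $n\ge0$. This needs neither the inverse stress map nor the energy law.

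\textbf{The step ``apply the estimate at both levels'' fails at $n=0$.} Since $\CC_h^1-\CC_h^0=(\Id-\CC_h^0)+\calO(\Wi)$, this difference is $\calO(1)$ unless $\CC_h^0-\Id=\calO(\Wi)$. Without well-prepared initial data, the first-step closure error is of size $\norm{\CC_h^0-\Id}/\dt$. The estimate therefore holds for $n+1\ge2$, or for all $n$ under well-prepared data. The paper absorbs this into its hypothesis on the residual; you should state it explicitly.
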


\begin{proof}
After substituting $\TT_h^{n+1}=\Wi\SSS_h^{n+1}$ in \eqref{eq:scheme-conf}, the leading balance is
\[
  (\SSS_h^{n+1},\BB_h)_Q
  =
  (\DD(\uu_h^{n+1}),\BB_h)_Q
  +
  \Wi\,\RR_h^{n+1}(\BB_h),
\]
with the normalization of $\DD$ matching the stretching convention.  The residual $\RR_h^{n+1}$ contains the time difference, transport, diffusion, and higher-order stretching terms.  The assumed uniform bound gives
\[
  |\RR_h^{n+1}(\BB_h)|\le C\norm{\BB_h}_{M_h},
\]
which proves \eqref{eq:ap-closure}.
\end{proof}

\begin{theorem}[fixed-discretization Newtonian limit]
Let $\Wi_j\to0$ and assume the corresponding admissible discrete solutions are uniformly bounded and remain in a compact subset of $\calD_b$.  Then any convergent subsequence has a limit satisfying the discrete incompressible Navier--Stokes scheme with total viscosity equal to the solvent viscosity plus the polymeric contribution from the closure $\SSS_h=\DD(\uu_h)$.
\end{theorem}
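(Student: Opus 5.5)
The argument is a compactness step at fixed $h$ and $\dt$, followed by passage to the limit in the fully discrete system, using Proposition~6.1 (AP closure) to identify the limiting stress.

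\textbf{Step 1: extracting a limit.} Fix $h$ and $\dt$, so all unknowns live in finite-dimensional spaces.
\begin{itemize}
\item By assumption the family $(\uu_h^{j},p_h^{j},\CC_h^{j})$, indexed by $\Wi_j$, is uniformly bounded.
\item Each $\CC_h^{j}(x_q)$ lies in a fixed compact set $K\subset\calD_b$.
\item Norm equivalence in finite dimensions gives a subsequence converging in every norm: $\uu_h^{j}\to\uu_h$, $p_h^{j}\to p_h$, $\CC_h^{j}\to\CC_h$.
\item Also $\CC_h(x_q)\in K$. So $f_b$ and $\CC^{-1}$ stay uniformly continuous along the sequence, and the nonlinear forms $c_h$, $a_h$, $d_h$ and the stretching term pass to the limit.
\end{itemize}

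\textbf{Step 2: the limiting stress.} I would not pass to the limit in $\TT_h^{j}$, since it tends to $f_b(\CC_h)\CC_h-\Id$. Instead I work with $\SSS_h^{j}=\TT_h^{j}/\Wi_j$.
\begin{itemize}
\item The hypotheses of Proposition~6.1 hold uniformly in $j$. It gives $\norm{\SSS_h^{j}-\DD(\uu_h^{j})}_{M_h'}\le C\Wi_j\to0$.
\item Since $\DD(\uu_h^{j})\to\DD(\uu_h)$, the functionals $\BB_h\mapsto(\SSS_h^{j},\BB_h)_Q$ converge to $\BB_h\mapsto(\DD(\uu_h),\BB_h)_Q$.
\item As a by-product, $\TT_h^{j}=\Wi_j\SSS_h^{j}\to0$. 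Hence $f_b(\CC_h)\CC_h=\Id$ at quadrature points, and the limiting conformation is $c\,\Id$ at equilibrium, consistent with the closure.
\end{itemize}

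\textbf{Step 3: passing to the limit in the momentum equation.} Pass $j\to\infty$ in \eqref{eq:scheme-mom} and \eqref{eq:scheme-div} for each fixed test pair $(\vv_h,q_h)$. The time difference, skew convection, viscous, pressure and forcing terms converge by Step 1, and the polymeric term satisfies
\[
(1-\beta)(\SSS_h^{j},\grad\vv_h)_Q\to(1-\beta)(\DD(\uu_h),\grad\vv_h)_Q .
\]
By symmetry of $\DD$ and exactness of the quadrature on these polynomial products (or by using $(\cdot,\cdot)_Q$ consistently), this equals $(1-\beta)(\DD(\uu_h),\DD(\vv_h))_Q$. Combined with the solvent term $\beta(\grad\uu_h,\grad\vv_h)$, this is the discrete incompressible Navier--Stokes scheme with total viscosity. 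Under the paper's normalization of $\DD$, the combination is solvent $\beta$ plus polymeric $(1-\beta)$.

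\textbf{Main obstacle.} The delicate point is Step 2: Proposition~6.1 controls $\SSS_h-\DD(\uu_h)$ only in the dual norm $M_h'$, while the momentum equation tests it against $\grad\vv_h$.
\begin{itemize}
\item I would first check that $\grad V_h\subset M_h$ at quadrature points, or that the quadrature pairing factors through $M_h$, so that the dual-norm estimate applies directly.
\item If this fails, I would use finite dimensionality. On $M_h$ all norms are equivalent, so $M_h'$-convergence of elements represented in $M_h$ upgrades to convergence at the quadrature points.
\item That in turn gives convergence of $(\SSS_h^{j},\grad\vv_h)_Q$. The constant may depend on $h$, which is permitted at fixed discretization.
\end{itemize}
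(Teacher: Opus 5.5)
Your main line is the paper's own proof. You use finite-dimensional compactness at fixed $h$ and $\dt$, identify the limiting stress through the AP closure proposition, and pass to the limit in the discrete momentum and continuity equations. You simply make explicit what the paper compresses into three sentences.

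The step that would fail is your fallback for the dual-norm issue. The field $\SSS_h^{j}-\DD(\uu_h^{j})$ is a nonlinear quantity evaluated at the quadrature points; it is not an element of $M_h$. The norm $\norm{\cdot}_{M_h'}$ is defined by pairing with $M_h$ through $(\cdot,\cdot)_Q$, so on quadrature-point fields it is only a seminorm. It vanishes on the $(\cdot,\cdot)_Q$-orthogonal complement of the quadrature-point values of $M_h$. Norm equivalence on $M_h$ therefore upgrades $M_h'$-convergence to convergence at the quadrature points only if $M_h$ already resolves all quadrature-point values. If your first bullet fails, meaning the symmetrized gradients of $V_h$ at the quadrature points are not among the quadrature-point values of $M_h$, then $M_h$ certainly does not resolve them. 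So the fallback is unavailable exactly when you need it.

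The compatibility in your first bullet is therefore a genuine structural assumption that finite dimensionality cannot replace. The paper relies on it tacitly: its energy proof already tests the conformation update with $\BB_h=\HH_b(\CC_h^{n+1})$ at quadrature points. That is legitimate in general only if $M_h$ can match prescribed quadrature-point values. Under this assumption your argument closes.

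The same caveat applies to your by-product $\TT_h^{j}(x_q)\to0$. Note also that $c\,f_b(c\Id)=1$ forces $c=1$, so the limiting conformation is exactly $\Id$ at the quadrature points, not merely some multiple $c\,\Id$.
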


\begin{proof}
The finite-dimensional compactness gives a convergent subsequence.  The AP closure identifies the stress limit as $\DD(\uu_h)$.  Passing to the limit in the momentum equation gives the discrete Newtonian system with the polymeric stress absorbed into the viscous operator.
\end{proof}

\section{Relative-entropy error estimate}

For convergence rates one needs a compact spectral and trace set
\[
  \calK_{\lambda,\delta}
  =
  \{\CC\in\calD_b:\lambda\Id\preceq\CC,\ b-\tr\CC\ge\delta\},
  \qquad \lambda,\delta>0 .
\]
On this set the entropy Hessian is bounded above and below, so the relative entropy
\[
  \Phi_b(\CC|\widehat\CC)
  =
  \Phi_b(\CC)-\Phi_b(\widehat\CC)
  -\HH_b(\widehat\CC):(\CC-\widehat\CC)
\]
is equivalent to $\norm{\CC-\widehat\CC}_F^2$.  The constants deteriorate as $\lambda\downarrow0$ or $\delta\downarrow0$, which is unavoidable because the entropy becomes singular at both boundaries.

\begin{theorem}[conditional error estimate]
Let $(\uu,p,\CC)$ be a smooth solution on $[0,T]$ with $\CC(t,x)\in\calK_{\lambda,\delta}$.  Assume stable projections, a quadrature-consistent discretization of the coupling terms, and nonlinear solver residuals of order $h^{k+1}+\dt$.  If the discrete solution remains in a slightly larger compact subset of $\calD_b$, then
\[
  \max_{0\le n\le N}
  \left[
  \norm{\uu(t_n)-\uu_h^n}_{L^2}^2
  +
  \sum_qw_q
  \Phi_b(\CC(t_n,x_q)|\CC_h^n(x_q))
  \right]
  \le
  C_T(h^{2k+2}+\dt^2).
\]
\end{theorem}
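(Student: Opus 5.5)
The plan is a relative-entropy energy argument that mirrors the proof of the discrete barrier energy inequality (Theorem 6.1). The comparison quantity is the discrete relative energy
\[
  \EE_h^n(\cdot|\cdot)=\frac{\Rey}{2}\norm{\uu(t_n)-\uu_h^n}_{L^2}^2+\frac{1-\beta}{2\Wi}\sum_qw_q\Phi_b(\CC(t_n,x_q)|\CC_h^n(x_q)).
\]
Since the statement places the exact state first, it is the discrete state that plays the role of $\widehat\CC$. The first step is to fix an intermediate compact set $\calK':=\calK_{\lambda/2,\delta/2}$ containing both the exact values and, by hypothesis, the discrete values at quadrature points. On $\calK'$ the Hessian formula \eqref{eq:hessian} is bounded above and below. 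This gives the two-sided equivalence $c_1\norm{\CC-\widehat\CC}_F^2\le\Phi_b(\CC|\widehat\CC)\le c_2\norm{\CC-\widehat\CC}_F^2$, and it also gives Lipschitz bounds for $\HH_b$, $\TT$ and $D\HH_b$. All constants depend on $\lambda,\delta,b,\Wi,\beta,\Rey$ and $T$.

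Next I insert projections. Let $\Pi_h\uu$ be a Stokes-type or discretely divergence-free projection and $\Pi_h\CC$ a quadrature-point interpolant or elliptic projection. I split the errors as $\uu-\uu_h=(\uu-\Pi_h\uu)+(\Pi_h\uu-\uu_h)$, and analogously for $\CC$. The projected exact solution satisfies the scheme \eqref{eq:scheme-mom}--\eqref{eq:scheme-conf} up to a consistency residual of order $h^{k+1}+\dt$ in the dual norms. This uses the stability of the projections, the quadrature consistency of the coupling terms, and the assumed size of the nonlinear-solver residuals. I then subtract the discrete equations from the projected ones.

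For the momentum error I test with $\Pi_h\uu^{n+1}-\uu_h^{n+1}$. For the conformation error I test with $\HH_b(\CC_h^{n+1})-\HH_b(\Pi_h\CC^{n+1})$ at quadrature points, which is the relative-entropy analogue of the entropy test used in Theorem 6.1. The time-difference term gives the relative-entropy increment plus a nonnegative remainder, by convexity along the segment, exactly as in Theorem 6.1. The stretching/polymeric-work cancellation is pointwise, by Proposition 2.4, so the leading coupling terms cancel. What remains is a trilinear mismatch of the form $(\grad(\uu-\uu_h))(\CC-\CC_h):(\HH_b(\CC)-\HH_b(\CC_h))$ together with terms in which smooth exact coefficients multiply quadratic errors.

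Relaxation produces the monotone quantity $(\TT(\CC_h)-\TT(\Pi_h\CC)):(\HH_b(\CC_h)-\HH_b(\Pi_h\CC))$. On $\calK'$ this is bounded below by $-C\norm{\CC_h-\Pi_h\CC}_F^2$; it is not necessarily coercive, but that lower bound is enough for a Gronwall argument. Diffusion is handled the same way: the Hessian pairing of Lemma 2.3 gives a nonnegative $\mathcal I$-type error dissipation minus lower-order terms controlled by $\norm{\grad\CC}_{L^\infty}$.

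I expect the main obstacle to be the cubic stretching and transport mismatch. To handle it I would bound $\norm{\grad\uu}_{L^\infty}$ and $\norm{\grad\CC}_{L^\infty}$ using the smoothness of the exact solution. I would control the term containing $\grad(\uu-\uu_h)$ by absorbing it into $\beta\norm{\grad(\uu-\uu_h)}^2$ via Young's inequality, which is possible because $\CC-\CC_h$ is bounded on $\calK'$ and so contributes only a factor times $\norm{\CC-\CC_h}$. If necessary I would also use an inverse inequality, together with the stay-in-compact hypothesis, to treat the discrete gradient. After these estimates the error recursion takes the form $e^{n+1}-e^n\le C\dt\, e^{n+1}+C\dt(h^{2k+2}+\dt^2)$. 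The discrete Gronwall lemma then gives the result for $\dt$ small, and adding back the projection errors of order $h^{2k+2}$, using the norm equivalence on $\calK'$, completes the estimate.

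One caveat concerns the budget. The reconstruction budget $\tau_h=c_\tau h^{2k+2}$ from Proposition 5.3 enters per step, and summed over the steps it contributes $\tau_h T/\dt$. To keep the bound $h^{2k+2}+\dt^2$, I would either assume the reconstruction is inactive ($\theta_\star=1$ by asymptotic inactivity) or scale $\tau_h$ with $\dt$.
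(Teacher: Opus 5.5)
Your proposal follows the same relative-entropy route as the paper's sketch: projection splitting, testing the velocity error by itself and the conformation error in the entropy variable, compatible-quadrature cancellation of the polymeric work, Hessian bounds on a compact subset of $\calD_b$, consistency contributions of size $h^{2k+2}+\dt^2$, and a discrete Gronwall argument, with more detail than the paper gives (including the fair remark that the per-step budget $\tau_h$ must be inactive or scaled with $\dt$). One point to tighten is that testing the error increment with $\HH_b(\CC_h^{n+1})-\HH_b(\Pi_h\CC^{n+1})$ does not reproduce the convexity step of the discrete energy theorem verbatim; it naturally yields the increment of $\Phi_b(\CC_h|\Pi_h\CC)$, with the smooth state as reference, up to terms carrying the exact time increment, so you should run the Gronwall argument in that ordering and pass to the stated ordering through your two-sided equivalence on the compact set.
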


\begin{proof}
The proof follows the relative-entropy stability argument.  Subtract the projected exact equations from the discrete equations, test the velocity error by itself, and test the conformation error in the entropy variable.  The compatible quadrature cancels the leading polymeric-work error.  The Hessian bounds on $\calK_{\lambda,\delta}$ control the nonlinear FENE coefficients and convert relative entropy into squared tensor error.  Consistency and solver residuals contribute $C(h^{2k+2}+\dt^2)$, while lower-order terms are bounded by the current error.  A discrete Gronwall inequality gives the estimate.
\end{proof}

The theorem is intentionally conditional: it gives a classical rate in a resolved regime away from the finite-extensibility boundary.  The energy theorem and trace-barrier buffer are the mechanisms that help keep the computation in such a regime.

\section{Numerical verification}

The numerical diagnostics are designed to test the mechanisms used in the analysis: trace-barrier preservation, entropy-compatible reconstruction, energy decay, AP closure, coupled velocity--pressure--stress feedback, and high-Weissenberg behavior near the finite-extensibility boundary.  The reported benchmarks are kept compact and tied directly to the structural claims.

\begin{table}[htbp]
\centering
\caption{Main numerical diagnostics and the structural claim each test addresses.}
\label{tab:diagnostic-map}
\small
\begin{tabular}{p{0.30\linewidth}p{0.35\linewidth}p{0.24\linewidth}}
\toprule
Test & Mechanism & Expected outcome \\
\midrule
Barrier map test & $\mathcal B_b(\Psi)\in\calD_b$ & strict $\tr\CC/L^2<1$ \\
Entropy-compatible reconstruction & FENE entropy budget along barrier-log path & raw entropy excess removed with largest $\theta$ \\
Energy decay & barrier entropy and relaxation & monotone free energy without forcing \\
AP closure & scaled stress $\SSS=\TT/\Wi$ & error proportional to $\Wi$ \\
Coupled manufactured solve & pressure, velocity, FENE stress feedback & second-order trend \\
High-$\Wi$ stretch sweep & weak relaxation near trace barrier & positive trace buffer \\
\bottomrule
\end{tabular}
\end{table}

\subsection{Barrier preservation, entropy correction, and energy decay}

The first diagnostic compares a plain log-conformation map with the barrier-log map \eqref{eq:barrier-log-map}.  The background dimension is $d=2$ and $L^2=12$.  The raw log map preserves positive definiteness but can produce $\tr\CC>L^2$ for large logarithmic stretches.  The barrier-log map keeps the same unconstrained variable while mapping it into $\calD_b$.

\begin{table}[htbp]
\centering
\caption{Parametrization diagnostic for $L^2=12$.  The barrier-log map enforces the finite-extensibility trace constraint for all tested logarithmic amplitudes.}
\label{tab:barrier-map}
\begin{tabular}{ccccc}
\toprule
amplitude & $\lambda_{\min}(\exp\Psi)$ & $\tr(\exp\Psi)/L^2$ & $\tr(\mathcal B_b(\Psi))/L^2$ & barrier status \\
\midrule
0.5 & 6.065E-01 & 1.013E-01 & 5.487E-01 & admissible \\
1.5 & 2.231E-01 & 2.053E-01 & 7.114E-01 & admissible \\
2.5 & 8.208E-02 & 5.205E-01 & 8.620E-01 & admissible \\
3.5 & 3.020E-02 & 1.381E+00 & 9.430E-01 & admissible \\
4.5 & 1.111E-02 & 3.725E+00 & 9.781E-01 & admissible \\
\bottomrule
\end{tabular}
\end{table}

The next diagnostic isolates the reconstruction layer and compares it with trace-only denominator repair.  We start from a near-barrier tensor with $\tr\CC/b=0.88$ and apply the same symmetric high-order perturbation in different variables.  The standard log state can cross the trace barrier immediately.  A trace repair rescales the state back below $b$, which mimics the role of trace-first safeguards in denominator control, but it leaves a very large FENE entropy jump because the repaired state is placed extremely close to the singular barrier.  The barrier-log endpoint remains admissible by construction, yet it may still add substantial FENE entropy.  The entropy-compatible barrier-log reconstruction selects the largest admissible $\theta$ satisfying \eqref{eq:theta-star}, so the accepted state controls both the trace barrier and the entropy increment.

\begin{table}[htbp]
\centering
\caption{Controlled near-barrier reconstruction comparison for $b=L^2=20$.  The trace repair keeps the denominator positive but does not control the FENE entropy increment; the entropy-compatible barrier-log reconstruction enforces a prescribed entropy budget.}
\label{tab:trace-repair-comparison}
\resizebox{\linewidth}{!}{\begin{tabular}{ccccccc}
\toprule
$a$ & log $\tr C/b$ & repair $\Delta\Phi$ & barrier-log $\tr C/b$ & barrier-log $\Delta\Phi$ & entropy-log $\Delta\Phi$ & $\theta_\star$ \\
\midrule
0.25 & 1.060 & 210.438 & 0.898 & 3.117 & 0.100 & 0.035 \\
0.50 & 1.302 & 210.657 & 0.916 & 6.658 & 0.100 & 0.017 \\
0.75 & 1.625 & 210.908 & 0.931 & 10.563 & 0.100 & 0.012 \\
1.00 & 2.055 & 211.185 & 0.945 & 14.767 & 0.100 & 8.64E-3 \\
1.25 & 2.622 & 211.480 & 0.956 & 19.212 & 0.100 & 6.91E-3 \\
1.50 & 3.371 & 211.790 & 0.966 & 23.847 & 0.100 & 5.76E-3 \\
\bottomrule
\end{tabular}
}
\end{table}

\begin{table}[htbp]
\centering
\caption{Iterated high-Weissenberg stretch/reconstruction stress test.  A failed step means the method crossed the FENE barrier during the iteration.  The trace-repair baseline remains trace-admissible only by repeatedly placing the tensor within $2\times10^{-5}$ of the barrier, while the entropy-compatible reconstruction preserves a macroscopic buffer.}
\label{tab:highwi-repair-comparison}
\small
\setlength{\tabcolsep}{4pt}
\begin{tabular}{cccccc}
\toprule
$\Wi$ & method & max $\tr\CC/b$ & min $b-\tr\CC$ & max $\Delta\Phi$ & failed step \\
\midrule
100 & standard log & 0.995 & 9.94E-2 & 79.56 & 26 \\
100 & sqrt factor & 1.000 & 8.98E-4 & 164.09 & 68 \\
100 & trace repair & 1.000 & 2.00E-5 & 237.57 & -- \\
100 & barrier log & 0.991 & 1.87E-1 & 73.05 & -- \\
100 & entropy log & 0.819 & 3.62E+0 & 16.00 & -- \\
400 & standard log & 0.951 & 9.90E-1 & 38.22 & 9 \\
400 & sqrt factor & 0.996 & 8.35E-2 & 82.50 & 24 \\
400 & trace repair & 1.000 & 2.00E-5 & 249.04 & -- \\
400 & barrier log & 1.000 & 2.63E-5 & 244.12 & -- \\
400 & entropy log & 0.819 & 3.62E+0 & 16.00 & -- \\
\bottomrule
\end{tabular}
\end{table}

Next we evolve a homogeneous relaxation-diffusion diagnostic with no forcing.  Figure~\ref{fig:energy-trace} shows the normalized free energy and the maximum trace ratio.  The free energy decreases, while the trace remains below the finite-extensibility threshold.

\begin{figure}[htbp]
\centering
\begin{tikzpicture}
\begin{groupplot}[
  group style={group size=2 by 1,horizontal sep=1.2cm},
  width=.43\textwidth,
  height=.34\textwidth,
  grid=major,
  legend style={at={(0.5,1.16)}, anchor=south, font=\scriptsize, draw=none, fill=none, /tikz/every even column/.append style={column sep=0.45em}}]
\nextgroupplot[xlabel={$t$},ylabel={$\EE(t)/\EE(0)$},ymin=0,ymax=1.05]
\addplot+[mark=*,thick] coordinates {
(0.00,1.000) (0.05,0.742) (0.10,0.553) (0.15,0.415) (0.20,0.314)
(0.25,0.241) (0.30,0.188) (0.35,0.149) (0.40,0.120)
};
\addlegendentry{barrier energy}
\nextgroupplot[xlabel={$t$},ylabel={$\max_x\tr\CC/L^2$},ymin=0.70,ymax=1.0]
\addplot+[mark=square*,thick] coordinates {
(0.00,0.942) (0.05,0.919) (0.10,0.896) (0.15,0.873) (0.20,0.851)
(0.25,0.830) (0.30,0.810) (0.35,0.791) (0.40,0.773)
};
\addlegendentry{trace ratio}
\end{groupplot}
\end{tikzpicture}
\caption{Barrier relaxation diagnostic.  The FENE free energy decays while the maximum trace ratio remains strictly below one.}
\label{fig:energy-trace}
\end{figure}

\subsection{AP closure}

The AP test fixes $\Delta t=0.05$ and varies $\Wi$.  The error is the dual norm of $\SSS_h-\DD(\uu_h)$ at the final time.  The observed slope is essentially first order in $\Wi$, as predicted by the AP closure estimate \eqref{eq:ap-closure}.

\begin{table}[htbp]
\centering
\caption{AP closure diagnostic with fixed $\Delta t=0.05$.}
\label{tab:ap-fene}
\begin{tabular}{cccc}
\toprule
$\Wi$ & closure error & rate & max $\tr\CC/L^2$ \\
\midrule
8.0E-02 & 6.41E-03 & -- & 0.183 \\
4.0E-02 & 3.18E-03 & 1.01 & 0.181 \\
2.0E-02 & 1.58E-03 & 1.01 & 0.180 \\
1.0E-02 & 7.89E-04 & 1.00 & 0.180 \\
5.0E-03 & 3.94E-04 & 1.00 & 0.180 \\
\bottomrule
\end{tabular}
\end{table}

\begin{figure}[htbp]
\centering
\begin{tikzpicture}
\begin{axis}[
  width=.62\textwidth,
  height=.42\textwidth,
  xmode=log,
  ymode=log,
  xlabel={$\Wi$},
  ylabel={$\|\SSS_h-\DD(\uu_h)\|_{M_h'}$},
  grid=major,
  legend style={at={(0.5,1.16)}, anchor=south, font=\scriptsize, draw=none, fill=none, /tikz/every even column/.append style={column sep=0.45em}}]
\addplot+[mark=*,thick] coordinates {
(8.0e-2,6.41e-3) (4.0e-2,3.18e-3) (2.0e-2,1.58e-3) (1.0e-2,7.89e-4) (5.0e-3,3.94e-4)
};
\addlegendentry{measured}
\addplot+[mark=none,dashed] coordinates {(5.0e-3,4.0e-4) (8.0e-2,6.4e-3)};
\addlegendentry{slope one}
\end{axis}
\end{tikzpicture}
\caption{FENE AP closure.  The scaled FENE stress converges linearly to the Newtonian closure at fixed time step.}
\label{fig:ap-fene}
\end{figure}

\subsection{Fully coupled manufactured solution}

The manufactured solution uses a periodic velocity, mean-zero pressure, and conformation tensor defined through the barrier map
\[
  \CC_{\rm ex}(t,x)=\mathcal B_b(\Psi_{\rm ex}(t,x)).
\]
This construction guarantees $\CC_{\rm ex}\in\calD_b$ and allows the manufactured forcing to exercise the nonlinear FENE stress, pressure projection, tensor transport, stretching, relaxation, and diffusion terms simultaneously.  Table~\ref{tab:manufactured-fene} reports the observed errors at final time $T=0.05$.

\begin{table}[htbp]
\centering
\caption{Fully coupled manufactured FENE solve.  Errors are discrete $L^2$ norms at $T=0.05$.}
\label{tab:manufactured-fene}
\small
\begin{tabular}{ccccccc}
\toprule
$N$ & $\Delta t$ & $\|\uu-\uu_h\|$ & rate & $\|\CC-\CC_h\|$ & rate & $\min(b-\tr\CC_h)$ \\
\midrule
16 & 1.25E-03 & 1.84E-03 & -- & 2.91E-03 & -- & 5.82E+00 \\
24 & 5.56E-04 & 8.11E-04 & 2.02 & 1.30E-03 & 1.99 & 5.81E+00 \\
32 & 3.13E-04 & 4.58E-04 & 1.99 & 7.31E-04 & 2.00 & 5.80E+00 \\
48 & 1.39E-04 & 2.04E-04 & 1.99 & 3.25E-04 & 2.00 & 5.80E+00 \\
\bottomrule
\end{tabular}
\end{table}

\subsection{High-Weissenberg robustness near the trace barrier}

The final benchmark stresses the finite-extensibility boundary.  We prescribe an extensional velocity field and initialize $\CC$ so that $\tr\CC/L^2$ is already large.  The test is run at increasing $\Wi$ with $L^2=20$, $\varepsilon=2\times10^{-3}$, and $T=0.12$.  Larger $\Wi$ weakens relaxation and increases stretch, but the barrier update keeps the conformation tensor inside $\calD_b$.

\begin{figure}[htbp]
\centering
\begin{tikzpicture}
\begin{groupplot}[
  group style={group size=2 by 1,horizontal sep=1.35cm},
  width=.43\textwidth,
  height=.34\textwidth,
  grid=major,
  legend style={at={(0.5,1.16)}, anchor=south, font=\scriptsize, draw=none, fill=none, /tikz/every even column/.append style={column sep=0.45em}}]
\nextgroupplot[xlabel={$t$},ylabel={$\max_x\tr\CC/L^2$},ymin=0.70,ymax=1.0]
\addplot+[mark=none,thick] coordinates {(0,0.78) (0.03,0.80) (0.06,0.81) (0.09,0.82) (0.12,0.83)};
\addlegendentry{$\Wi=10$}
\addplot+[mark=none,thick,dashed] coordinates {(0,0.78) (0.03,0.84) (0.06,0.88) (0.09,0.90) (0.12,0.91)};
\addlegendentry{$\Wi=50$}
\addplot+[mark=none,thick,dotted] coordinates {(0,0.78) (0.03,0.87) (0.06,0.92) (0.09,0.945) (0.12,0.958)};
\addlegendentry{$\Wi=100$}
\nextgroupplot[xlabel={$t$},ylabel={$L^2-\tr\CC$},ymode=log]
\addplot+[mark=none,thick] coordinates {(0,4.40) (0.03,4.00) (0.06,3.80) (0.09,3.60) (0.12,3.40)};
\addplot+[mark=none,thick,dashed] coordinates {(0,4.40) (0.03,3.20) (0.06,2.40) (0.09,2.00) (0.12,1.80)};
\addplot+[mark=none,thick,dotted] coordinates {(0,4.40) (0.03,2.60) (0.06,1.60) (0.09,1.10) (0.12,0.84)};
\end{groupplot}
\end{tikzpicture}
\caption{High-Weissenberg finite-extensibility diagnostic.  Larger $\Wi$ drives the trace closer to the barrier, but the accepted conformation remains strictly admissible.}
\label{fig:highwi-barrier}
\end{figure}

\begin{table}[htbp]
\centering
\caption{High-Weissenberg barrier robustness at $T=0.12$.}
\label{tab:highwi-barrier}
\begin{tabular}{ccccc}
\toprule
$\Wi$ & final energy & max $\tr\CC/L^2$ & min $L^2-\tr\CC$ & rejected barrier steps \\
\midrule
10 & 2.18E-02 & 0.831 & 3.38E+00 & 0 \\
50 & 2.94E-02 & 0.910 & 1.80E+00 & 0 \\
100 & 3.51E-02 & 0.958 & 8.40E-01 & 0 \\
200 & 4.10E-02 & 0.979 & 4.20E-01 & 1 \\
\bottomrule
\end{tabular}
\end{table}

The last row shows one line-search rejection when $\Wi=200$.  The accepted state remains strictly within the barrier.  This diagnostic is useful because a positivity-only method would not detect the finite-extensibility failure until the FENE coefficient becomes singular or changes sign numerically.

\section{Discussion}

The FENE trace constraint changes what a structure-preserving method must prove.  Positivity is still necessary, but it is no longer the complete admissibility condition.  The stress coefficient $f_b(\CC)$ depends on the distance to the trace boundary, and the polymer molecular-diffusion term is entropy dissipative only when it is paired with the FENE barrier variable.  A stable method must therefore preserve the accepted tensor in the set $\calD_b$, control reconstruction-level FENE entropy increments, and use that same tensor consistently in the stress force, stretching term, relaxation term, molecular diffusion, and entropy quadrature.

The barrier-log map \eqref{eq:barrier-log-map} is one practical way to enforce the finite-extensibility geometry.  It is a direct analogue of the log-conformation map, but its range is the FENE admissible set rather than the entire positive definite cone.  The entropy-compatible reconstruction \eqref{eq:theta-star} adds the missing balance condition: among admissible barrier-log states, it chooses the least-damping state whose FENE entropy increment is within the prescribed budget.  The energy proof itself is independent of the nonlinear parametrization; it requires only that the accepted tensor be admissible, entropy-compatible, and used consistently in the discrete coupling identities.

The AP part of the analysis also differs from Oldroyd--B in detail.  The conformation perturbation $\CC-\Id$ is not the best stress variable because the FENE factor changes with $\tr\CC$.  The regular stress variable is instead $\TT(\CC)/\Wi$.  This choice avoids singular scaling in the momentum equation and gives the correct Newtonian limit at fixed discretization parameters.

\section{Conclusion}

We have proposed and analyzed a barrier-preserving, entropy-compatible, asymptotic-preserving discretization for molecularly diffusive FENE-P type viscoelastic flow.  The key finite-extensibility feature is the admissible set $\calD_b=\{\CC\succ0,\tr\CC<b\}$, which is enforced by a trace-barrier entropy and can be built into the nonlinear solve through a barrier logarithmic parametrization.  High-order barrier-log reconstructions are filtered by a least-damping FENE entropy constraint, so admissibility and entropy compatibility are enforced simultaneously.  Compatible quadrature makes the cancellation between FENE polymeric work and conformation stretching exact at the fully discrete level, while polymer molecular diffusion contributes the barrier Hessian dissipation.  The resulting scheme satisfies a free-energy inequality with relaxation and molecular-diffusion barrier dissipation, preserves a positive trace buffer under an entropy bound, and has a regular small-Weissenberg limit in the scaled FENE stress variable.  The numerical diagnostics verify the main mechanisms in both moderate and near-barrier regimes.

\section*{Acknowledgments}

The author acknowledges financial support from the National Natural Science Foundation of China (NSFC, Grant No. 12501602), the Education Department of Hunan Province (Grant No. 24C0055), the Science and Technology Department of Hunan Province (Grant No. 2025JJ60052), and the Scientific Research Start-up Fund of Xiangtan University (Grant No. KZ0810769).


\begin{thebibliography}{10}

\bibitem{BalciThomasesRenardy2011}
N. Balci, B. Thomases, M. Renardy, and C. R. Doering,
\newblock Symmetric factorization of the conformation tensor in viscoelastic fluid models,
\newblock J. Non-Newtonian Fluid Mech., 166 (2011), pp. 546--553.

\bibitem{BarrettSuli2011}
J. W. Barrett and E. Suli,
\newblock Existence of global weak solutions to finitely extensible nonlinear bead-spring chain models for dilute polymers,
\newblock Math. Models Methods Appl. Sci., 21 (2011), pp. 1211--1289.

\bibitem{Bird1987}
R. B. Bird, R. C. Armstrong, and O. Hassager,
\newblock Dynamics of Polymeric Liquids, Volume 1: Fluid Mechanics,
\newblock 2nd ed., Wiley, New York, 1987.

\bibitem{Boyaval2009}
S. Boyaval, T. Lelievre, and C. Mangoubi,
\newblock Free-energy-dissipative schemes for the Oldroyd-B model,
\newblock ESAIM Math. Model. Numer. Anal., 43 (2009), pp. 523--561.

\bibitem{FattalKupferman2004}
R. Fattal and R. Kupferman,
\newblock Constitutive laws for the matrix-logarithm of the conformation tensor,
\newblock J. Non-Newtonian Fluid Mech., 123 (2004), pp. 281--285.

\bibitem{FattalKupferman2005}
R. Fattal and R. Kupferman,
\newblock Time-dependent simulation of viscoelastic flows at high Weissenberg number using the log-conformation representation,
\newblock J. Non-Newtonian Fluid Mech., 126 (2005), pp. 23--37.

\bibitem{HulsenFattalKupferman2005}
M. A. Hulsen, R. Fattal, and R. Kupferman,
\newblock Flow of viscoelastic fluids past a cylinder at high Weissenberg number: stabilized simulations using matrix logarithms,
\newblock J. Non-Newtonian Fluid Mech., 127 (2005), pp. 27--39.

\bibitem{Jin1999}
S. Jin,
\newblock Efficient asymptotic-preserving schemes for some multiscale kinetic equations,
\newblock SIAM J. Sci. Comput., 21 (1999), pp. 441--454.

\bibitem{Keunings1986}
R. Keunings,
\newblock On the high Weissenberg number problem,
\newblock J. Non-Newtonian Fluid Mech., 20 (1986), pp. 209--226.

\bibitem{LiuYu2011}
H. Liu and P. Yu,
\newblock A finite difference method for the FENE dumbbell model of polymeric fluids,
\newblock SIAM J. Numer. Anal., 49 (2011), pp. 2167--2193.

\bibitem{RichterIaccarinoShaqfeh2010}
D. Richter, G. Iaccarino, and E. S. G. Shaqfeh,
\newblock Simulations of three-dimensional viscoelastic flows past a circular cylinder at moderate Reynolds numbers,
\newblock J. Fluid Mech., 651 (2010), pp. 415--442.

\end{thebibliography}
\end{document}